\numberwithin{equation}{section}
\newtheorem{lemma}{Lemma}[section]
\newtheorem{theorem}[lemma]{Theorem}
\newtheorem{proposition}[lemma]{Proposition}
\newtheorem{corollary}[lemma]{Corollary}
\newtheorem{remark}[lemma]{Remark}
\newtheorem{definition}[lemma]{Definition}
\def\im{{\rm i}}
\def\cU{{\mathcal{U}}}
\def\cR{{\mathcal{R}}}
\def\cP{{\mathcal{P}}}
\def\cT{{\mathcal{T}}}
\def\resto{{\mathcal{R}}}
\def\poi#1#2{\left\{#1;#2\right\}}
\newcommand{\R}{\mathbb R}
\newcommand{\Z}{\mathbb Z}
\newcommand{\N}{\mathbb N}
\newcommand{\T}{\mathbb T}
\def\norma#1{\Vert#1\Vert}
\def\sy#1{\cP^{#1,\delta}}
\newcommand{\ep}{\varepsilon}
\newcommand{\res}{{(\rm{res})}}
\newcommand{\nr}{{(\rm{nr})}}
\newcommand{\dist}{\textrm{dist}}
\newcommand{\p}{p} %{\widetilde{p}}
\newcommand{\pp}{\overline{p}}
\newcommand{\qq}{\overline{q}}
\def\ac{{\mathcal A}\kern-.7pt\ell\kern-.9pt\mathcal{S}}
\begin{document}

\title{A simple proof for a $C^\infty$ Nekhoroshev theorem } \author{
  D. Bambusi$^{*}$, B. Langella\footnote{Dipartimento di Matematica,
    Universit\`a degli Studi di Milano, Via Saldini 50, I-20133
    Milano. \newline \textit{Email: } \texttt{dario.bambusi@unimi.it,
      beatrice.langella@unimi.it}}}
	
	\maketitle
	
	\begin{abstract}
We prove a $C^\infty$ version of the Nekhoroshev's estimate on the
stability times of the actions in close to integrable Hamiltonian
systems. The proof we give is a variant of the original Nekhoroshev's
proof and it consists in first conjugating, globally in the phase space,
and up to a small remainder, the system to a normal form.  Then we
perform the geometric part of the proof in the normalized
variables. As a result, we obtain a proof which is simpler than
the usual ones.
	\end{abstract}

\section{Introduction}

In this paper we prove a $C^{\infty}$ version of Nekhoroshev's Theorem
for the stability times in a close to integrable Hamiltonian
system. The proof we give is much simpler than the usual one and leads
to an intermediate result that we think could have some interest in
itself (see Theorem \ref{main2}).

To be definite and in order to avoid as much as possible technical
complications, we study here
a system of the form
\begin{equation}
  \label{H1}
\begin{gathered}
  H(p, q) = H_0(p) + \ep  V(p,q)\,,\\ H_0(p) =
\sum_{j=1}^{d}\frac{p_j^2}{2}\,,
\end{gathered}
\end{equation}
with $V\in C^\infty(\T^d\times\R^d)$. However, 
our technique is applicable also to the general case of 
perturbations of steep integrable systems, and with a perturbation
which is not globally bounded in the momenta $p$.

The result we get is the following version of Nekhoroshev's Theorem.
\begin{theorem}
  \label{nek}Assume that $V\in C^{\infty}(\T^d\times\R^d)$ is globally bounded and
fix a positive $b<\frac{1}{2}$. Then,  for any positive $M$, there exists
$C_M,\ep _M$ such that, if $0<\ep <\ep _M$ then, corresponding
to any initial datum, one has
\begin{align}
  \label{sti_az}
  \left|p(t)-p(0)\right|\leq C_M\ep ^b\ ,
  \\
  \label{times}
  |t|\leq
\frac{C_M }{\ep ^M}\ .
\end{align}
\end{theorem}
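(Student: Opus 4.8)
The strategy, following the paper's announced approach, is to split the proof into an analytic (normal form) part and a geometric part, with the key simplification being that we first push the system to a normal form \emph{globally} on phase space, up to a remainder that is exponentially small in a suitable power of $\ep$, and only afterwards run Nekhoroshev's geometry in the normalized coordinates. Since $V$ is globally bounded and $H_0(p)=\tfrac12|p|^2$ is the standard convex (hence steep) integrable Hamiltonian, we are in the most favorable situation: all the steepness/geometry is that of a strictly convex function, for which the covering of frequency space by resonant zones is elementary.

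\medskip

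\noindent\textbf{Step 1: Global averaging / normal form.} The first step is to construct, for any $M$ and all small $\ep$, a symplectic change of variables $\Phi_\ep$, $\ep$-close to the identity and defined on all of $\T^d\times\R^d$, conjugating $H$ to
\[
H\circ\Phi_\ep = H_0(p) + \ep Z(p,q;\ep) + \ep R(p,q;\ep),
\]
where $Z$ is in normal form (it Poisson-commutes with $H_0$ on each resonant block, equivalently its Fourier support in $q$ lies in the relevant resonance lattice at the point $p$) and the remainder satisfies $\norma{R}_{C^0}\le C_M\,\ep^{M}$ on the whole phase space. The mechanism is the usual iterative Birkhoff/Lie-series normalization: at each step one solves a homological equation $\omega(p)\cdot\partial_q\chi = (\text{non-resonant part})$, where $\omega(p)=\nabla H_0(p)=p$; the small-divisor issue is handled by the standard geometric separation of Fourier modes into resonant (kept) and non-resonant (removed) according to whether $|\omega(p)\cdot k|$ is small or not. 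Because $V\in C^\infty$ and is globally bounded together with all its derivatives (this is where $C^\infty$, as opposed to analytic, data forces a \emph{polynomial} rather than exponential gain), one cannot iterate infinitely often, but one can iterate $\sim M$ (or $\sim M/\,$something) times and gain a remainder of size $\ep^{M}$. This is precisely the content of the intermediate result advertised as Theorem~\ref{main2}, which I would simply quote.

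\medskip

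\noindent\textbf{Step 2: Geometric part in normalized variables.} In the new variables write $p$ still for the action. Along the flow of $H\circ\Phi_\ep$ we have $\dot p = -\ep\,\partial_q Z - \ep\,\partial_q R$. The contribution of $R$ is harmless: $|\ep\,\partial_q R|\le C_M\ep\cdot\ep^{M-1}$... more precisely one takes $M$ in Step~1 large enough that over the time scale $|t|\le C_M\ep^{-M}$ the accumulated drift from $R$ is $\le C_M\ep^{b}$. The real point is to control the drift due to $Z$. Here one runs the classical Nekhoroshev geometric argument, but now it is clean because everything is global and $H_0$ is convex: partition $\R^d$ (the $p$-space) into resonant zones $Z_\alpha$ of multiplicities $0,1,\dots,d-1$ according to which resonances $\omega(p)\cdot k=0$ are ``almost satisfied'' at the relevant scale; on a zone of multiplicity $m$ the normal-form term $Z$ depends on $q$ only through $m$ combinations $k^{(1)}\cdot q,\dots,k^{(m)}\cdot q$, so $\dot p$ lies in the $m$-dimensional \emph{resonant module} $\mathrm{span}\{k^{(1)},\dots,k^{(m)}\}$. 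Convexity of $H_0$ then gives the crucial geometric fact: on that module $H_0$ is itself strictly convex, so $\{H_0 = \text{const}\}$ confines motion in the module directions to a ball of radius $\sim\sqrt{\ep}$ (up to the small energy variation caused by $\ep R$ and by the time-dependence, which one tracks); meanwhile the component of $p(t)$ transverse to the module is almost conserved until the trajectory would exit the zone. A standard ``flow of zones'' / bootstrap argument — the trajectory can only leave a zone of multiplicity $m$ by entering one of multiplicity $m+1$ or lower, and the total possible drift is controlled by summing the finitely many zone crossings — then yields $|p(t)-p(0)|\le C_M\ep^{b}$ for $|t|\le C_M\ep^{-M}$. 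Transporting the estimate back through $\Phi_\ep$ (which moves $p$ by $O(\ep)$, negligible against $\ep^{b}$ since $b<\tfrac12<1$) gives Theorem~\ref{nek}.

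\medskip

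\noindent\textbf{Main obstacle.} The analytically delicate point is Step~1: making the normalization \emph{global in $p\in\R^d$} while keeping uniform $C^\infty$ bounds, and correctly choosing how the number of normalization steps, the resonance cutoffs, and the zone widths all scale with $\ep$ so that (i) the homological equations remain solvable with the losses under control, (ii) the final remainder is $O(\ep^M)$, and (iii) the geometric confinement radius comes out as $\ep^{b}$ with the stated $b<\tfrac12$. Getting all these exponents to fit together — in particular ensuring the remainder-induced drift over the exponentially... over the \emph{polynomially}-long time $\ep^{-M}$ stays below $\ep^{b}$ — is the part that requires care; the geometry, thanks to convexity, is comparatively routine.
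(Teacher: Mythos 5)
Your overall architecture --- a canonical transformation defined globally on phase space that conjugates $H$ to a normal form up to an $O(\ep^{M})$ remainder, followed by a geometric argument carried out entirely in the normalized variables, with the remainder's drift added back over times $|t|\le\ep^{-M}$ --- is exactly the paper's. The substantive divergence is the confinement mechanism in your Step 2. You confine the drift along a resonant module $M$ by energy conservation plus strict convexity of $H_0$ restricted to $p+M_{\R}$ (the Lochak--P\"oschel route for quasi-convex systems). The paper never invokes energy conservation or convexity: since $\nabla H_0(p)=p$, the resonance conditions $|p\cdot k_j|<C_j\ep^{\delta_j}$ for $s$ independent $k_j\in M$ \emph{directly} force the projection $p_M$ to be small (Lemma \ref{giorgilli}), so the fast drift set $\Pi^{(s)}_M(p)=\{p+M_{\R}\}\cap Z^{(s)}_M$ already has diameter $O(K^{s-1}\ep^{\delta_s})$, and confinement reduces to a ``no exit'' statement proved by induction on $\dim M$ together with the non-overlapping of resonances (Lemma \ref{lemma lontani}). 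Both mechanisms are valid here; the paper's yields a genuine partition into blocks invariant for \emph{all} times under the pure normal form (Theorem \ref{parte geom}, hence Theorem \ref{main2}), and is what the authors claim generalizes to the steep case, whereas your route is tied to quasi-convexity.

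As a proof, however, the proposal has genuine gaps beyond the choice of route. You cannot ``simply quote'' Theorem \ref{main2} for Step 1: its item 3 already \emph{is} the geometric confinement, so this is circular; what must actually be proved is the Normal Form Lemma \ref{nor.for}, i.e.\ the smooth-cutoff splitting of each Fourier coefficient, the solvability of the cohomological equation on the support of $1-\chi_k$, and the symbol calculus that closes the iteration --- none of which you supply. Likewise, your ``flow of zones / bootstrap'' sentence conceals the entire content of the geometric part: the construction of the nested parameters $\delta_s$, $C_s$, the separation of incomparable resonant zones by more than the fast-drift diameter, and the case analysis at the exit point are exactly where the exponent $b<\tfrac12$ is produced. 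As written, this is a correct and well-oriented plan, not a proof.
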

We recall that in the analytic (or Gevrey) case the time of stability,
cf. eq. \eqref{times},
is known to be exponentially long.

Theorem \ref{nek} is not new, for example it is a direct consequence
of Theorem 2.1 of \cite{Bou10}\footnote{Actually such a theorem is
  also stated in \cite{MS04}, where reference is made to a slightly
  different statement present in \cite{MS02}}. However, as far as we
know our proof is new and the value of the exponent $b$ that we get is
better than those present in literature\footnote{In the analytic case
  any $b<1/2$ is allowed at the price of worsening the estimate on
  the times, see for example \cite{poschel}}.

Essentially two methods of proof of Nekhoroshev's theorem are known:
the original one \cite{Nek1,Nek2,BGG85,BG86,poschel,GZ92, chierchia},
and Lochak's one \cite{Loc,Loc2} (see also \cite{MS02, MS04} and
\cite{BG93,Bam99} for infinite dimensional generalizations). Lochak's
method was also extended to the steep case by using also ideas from
the original proof by Nekhoroshev \cite{Nie04}, see also
\cite{Nie07,Bou11,BN12}.

Our proof is a variant of Nekhoroshev's original one which consists of
two steps: the analytic part and the geometric part. Classically, in
the analytic part one shows that in a region of the phase space where
only some resonances are present one can conjugate, up to a small
remainder, the system to a system in resonant normal form. In the
geometrical part one collects all the information and shows that, if
the regions are suitably constructed, then for any initial datum there
exists a region in which it remains for the considered times, and this
leads to Nekhoroshev's estimate.

In the classical approach the analysis of the geometrical part is
slightly complicated by the fact that it has to be performed in the
original coordinates, so that one has to take into account the effects
of the coordinate transformation used to conjugate the system to
normal form. The novelty of the present paper is that we use a
canonical transformation which is globally defined and globally
conjugates the system to a normal form which is different in each
region of the phase space, depending on the resonances which are
present in each region (a similar technique has been used for the
first time in a probabilistic context in \cite{olandesi,BMT19}).  This
is obtained by splitting each Fourier coefficient of $V$, namely $
\hat V_k(p) $, into a part localized in the region $|\omega(p)\cdot
k|<\ep ^\delta$ (with a suitable $\delta$) and a part localized in
the nonresonant region. The part localized in the nonresonant region
is then removed through the normalizing canonical
transformation. Technically the localization is obtained simply by
multiplying by a smooth cutoff function.

Then the geometrical part consists in making a decomposition of the
phase space in regions which are invariant for the dynamics of the
normalized system. This leads in particular to the conclusion that, in
the dynamics of the normalized system, estimate \eqref{sti_az} is
valid for all times. This is the content of the following theorem,
which, as far as we know, is new.
\begin{theorem}
  \label{main2}
Fix a positive $b<\frac{1}{2}$, then,  for any positive $M$, there exists
$C_M,\ep _M$ such that, if $0<\ep <\ep _M$ then there
exists a canonical transformation $(p,q)=\cT(\tilde p,\tilde q)$ and a
(normal form)
Hamiltonian $H_Z$, with
the following properties
\begin{itemize}
\item[1.] {$|p-\tilde p|\leq C_M\ep ^{b}$}
%$|p-\tilde p|\leq C_M\ep ^{b+\frac{1}{2}}$
\item[2.] $\norma{H\circ\cT-H_Z}_{C^{2}(\R^d\times\T^d)}\leq C_M\ep ^M$,
\item[3.] Along the solutions of the Hamilton equations of $H_Z$, one
  has
  $$ |{\tilde p(t)-\tilde p(0)}|\leq C_M\ep ^b\ ,\quad \forall
  t\in\R\ .
  $$ 
\end{itemize}
  \end{theorem}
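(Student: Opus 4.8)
The plan is to run Nekhoroshev's two steps --- the analytic (normal form) part and the geometric part --- but, unlike in the classical proof, to perform the \emph{geometric} part entirely in the normalized variables, so that the coordinate change plays no role there. Since $\omega(p)=\nabla H_0(p)=p$, I first fix an exponent $\delta\in(b,1/2)$ and an ultraviolet threshold $K=\ep^{-a}$ with $a>0$ small, and for each $k\in\Z^d$, $0<|k|\le K$, I introduce a smooth cutoff $\rho_k(p)$ equal to $1$ on $\{|p\cdot k|\le\ep^\delta\}$, supported in $\{|p\cdot k|\le 2\ep^\delta\}$, with $\norma{\pa^\alpha\rho_k}\lesssim\ep^{-\delta|\alpha|}$. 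Then I run a finite iteration of Lie transforms: at step $j$ I split the current perturbation into $P_j=P_j^{(\mathrm{res})}+P_j^{(\mathrm{nr})}$, where $P_j^{(\mathrm{res})}$ keeps the $p$-average together with the terms $\rho_k(p)\widehat{P_{j,k}}(p)e^{{\rm i}k\cdot q}$ for $0<|k|\le K$, and $P_j^{(\mathrm{nr})}$ is the rest --- the Fourier tail $|k|>K$ plus the terms $(1-\rho_k)\widehat{P_{j,k}}$, which are supported in $\{|p\cdot k|\ge\ep^\delta\}$. I remove the low-mode part of $P_j^{(\mathrm{nr})}$ by the generating function with $\widehat{\chi_{j,k}}=\widehat{P^{(\mathrm{nr})}_{j,k}}/({\rm i}\,p\cdot k)$; this is well defined, and since $|p\cdot k|\ge\ep^\delta$ on the supports, $\norma{\chi_j}_{C^s}\lesssim\ep^{-\delta}\norma{P_j^{(\mathrm{nr})}}_{C^{s+n_0}}$ (a finite loss of derivatives from the $\rho_k$'s). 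Conjugating, $H_{j+1}=e^{\{\chi_j,\cdot\}}H_j$ has the form $H_0+\ep Z_{j+1}+P_{j+1}$ with $Z_{j+1}$ the accumulated Fourier-truncated resonant normal form and $P_{j+1}$ gaining a factor $O(\ep^{1-\delta})$ at each step; hence after $N=N(M)$ steps one reaches $H\circ\cT=H_Z+\cR$ with $H_Z:=H_0+\ep Z$, where $Z$ is supported on modes $k$ with $\rho_k(\tilde p)\ne0$ (so $|\tilde p\cdot k|<2\ep^\delta$), and $\norma{\cR}_{C^2}\le C_M\ep^M$, the Fourier tails being $O(K^{-\infty})$ by the $C^\infty$ regularity of $V$ once $s$ is fixed large enough; this is item 2. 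The map $\cT$ is globally defined on $\R^d\times\T^d$ (only the global cutoffs $\rho_k$, not any choice of resonant zone, enter its construction), and $|p-\tilde p|\le\sum_j\norma{\pa_q\chi_j}\lesssim K\ep^{1-\delta}=\ep^{1-\delta-a}\le\ep^b$ for $a$ small, which is item 1.

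For item 3 I decompose the $\tilde p$-space into resonant blocks. I fix a decreasing sequence $r_0>r_1>\cdots>r_d>0$ with $r_d\ge 2\ep^\delta$, each $r_m$ of order $\ep^\delta$ up to powers of $K$, and each ratio $r_{m-1}/r_m$ a large multiple of $K$; for a sublattice $\Lambda\subseteq\Z^d$ of rank $m$ generated by vectors of norm $\le K$, I let $B_\Lambda$ be the set of $\tilde p$ with $|\tilde p\cdot k|\le r_m$ for all $k\in\Lambda$, $|k|\le K$, and $|\tilde p\cdot k|>r_{m-1}$ for all $k\notin\Lambda$, $|k|\le K$. As in the classical construction these blocks cover $\R^d$. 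Two facts matter: (i) on (a $\rho$-enlargement, with $\rho\sim\ep^\delta$, of) $B_\Lambda$ the normal form $Z$ involves only modes $k\in\Lambda$, because such modes have $|\tilde p\cdot k|<2\ep^\delta\le r_m$ whereas modes outside $\Lambda$ keep $|\tilde p\cdot k|>r_{m-1}-\rho K>2\ep^\delta$, so $\rho_k\equiv0$ there; hence (ii) $\pa_{\tilde q}Z\in\Span_\R\Lambda$ on that enlargement.

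Now the confinement argument. Fix an initial datum for $H_Z$ with $\tilde p(0)\in B_\Lambda$, $m=\mathrm{rank}\,\Lambda$, and let $\pi,\pi^\perp$ be the orthogonal projections onto $\Span_\R\Lambda$ and its complement. While the orbit stays in the $\rho$-enlargement of $B_\Lambda$, $\dot{\tilde p}=-\ep\,\pa_{\tilde q}Z\in\Span_\R\Lambda$, so $\pi^\perp\tilde p(t)$ is constant; and conservation of $H_Z$ gives $|H_0(\tilde p(t))-H_0(\tilde p(0))|=\ep\,|Z(\tilde p(0),\tilde q(0))-Z(\tilde p(t),\tilde q(t))|\le 2\ep\,\norma{Z}_{C^0}\le C\ep$, so, using $H_0=\tfrac12|\tilde p|^2$ and $\pi^\perp\tilde p$ constant, $\big|\,|\pi\tilde p(t)|^2-|\pi\tilde p(0)|^2\,\big|\le 2C\ep$. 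A lattice-geometry estimate, uniform over the finitely many sublattices generated by vectors of norm $\le K$ (such a lattice has a basis of vectors of norm $\lesssim_d K$, hence Gram determinant bounded below), gives $|\pi\tilde p(0)|\le C_K r_m\lesssim\ep^\delta$; since $\delta<1/2$ we have $\ep^{2\delta}\gg\ep$, whence $|\pi\tilde p(t)|^2\le C_K^2 r_m^2+2C\ep\lesssim\ep^{2\delta}$, i.e. $|\pi\tilde p(t)|\le C_M\ep^\delta$. Therefore $|\tilde p(t)-\tilde p(0)|\le|\pi\tilde p(t)|+|\pi\tilde p(0)|\le C_M\ep^\delta\le C_M\ep^b$; taking the enlargement $\rho$ a fixed multiple of this bound, the orbit never reaches the boundary of the enlarged block, so a standard bootstrap gives the estimate for all $t\in\R$, which is item 3. (Theorem \ref{nek} then follows by transporting this back through $\cT$ and using item 2 with Gr\"onwall to replace $H_Z$ by $H\circ\cT$ on the time scale $\ep^{-M}$.)

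The hard part is the geometric construction: one must choose the radii $r_m$ so that the blocks simultaneously cover $\R^d$, have $\rho$-enlargements that remain in the same resonance class even though $K=\ep^{-a}\to\infty$, and keep $|\pi\tilde p(0)|$ small enough for the convexity estimate to close, while checking that the lattice-geometry bound is uniform in $\Lambda$. The analytic part is, by comparison, a routine finite Lie-transform scheme; its only slightly non-standard ingredient is the global-in-$p$ cutoffs $\rho_k$ and the verification that they cost only powers of $\ep^{-\delta}$ in the $C^s$ estimates.
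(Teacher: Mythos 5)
Your analytic part follows the paper's scheme (global smooth cutoffs in $|p\cdot k|$, iterated Lie transforms, Fourier tail absorbed into the remainder) and is sound. The genuine gap is in the geometric part, at the step you dismiss with ``as in the classical construction these blocks cover $\R^d$''. With your definition of $B_\Lambda$ --- a \emph{single} resonant threshold $r_m$ for all $k\in\Lambda$, $|k|\le K$, and a \emph{single} non-resonant threshold $r_{m-1}$ for all $k\notin\Lambda$, $|k|\le K$ --- the blocks do not cover $\R^d$. Take $d=2$ and $\tilde p=(0,c)$ with $r_1<c<r_0$: the rank-one block of $\Lambda=\Z(1,0)$ fails because $|\tilde p\cdot(0,1)|=c\le r_0$ violates the non-resonance requirement; every other rank-one block and the rank-zero block fail because $|\tilde p\cdot(1,0)|=0$ is small; the rank-two block fails because $c>r_2$. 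Since you need $r_0/r_1\gg K$, the uncovered set is large, and the same obstruction appears in any dimension whenever some $|\tilde p\cdot k|$ falls in a gap $(r_m,r_{m-1})$ relative to the ``wrong'' module. An orbit starting at such a point has no block in which your trapping argument can run, so item 3 is not established. Repairing this is precisely the nontrivial content of the geometric part: the paper uses per-vector nested thresholds $C_j\ep^{\delta_j}$ in the definition of the zones (Definition \ref{def zone}), defines blocks as zones minus all higher-dimensional zones so that the covering is automatic, and then \emph{proves} the separation from alien resonances (Lemma \ref{lemma lontani}) rather than imposing it in the definition; alternatively one can run a per-point pigeonhole over $d+1$ nested levels to locate, for each initial datum, a rank at which the resonance pattern has a sufficiently wide gap. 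You flag the choice of radii as ``the hard part'' but then assert the conclusion for a block definition for which it is false. A related quantitative slip: ``each ratio $r_{m-1}/r_m$ a large multiple of $K$'' is too weak, since your own drift bound carries a factor $K^{m-1}$ and testing against a mode of size $K$ costs another $K$, so you need $r_{m-1}\gg K^m r_m$; this is exactly what the paper's choices $\delta_{s+1}=\delta_s-\beta s$, $C_{s+1}=3s2^sC_s+1$ encode.

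Apart from this, your confinement mechanism genuinely differs from the paper's: you trap $|\pi\tilde p(t)|$ by conservation of $H_Z$ plus the convexity of $H_0=\tfrac12|p|^2$ restricted to the fast-drift plane (a Lochak--P\"oschel style argument), whereas the paper never invokes convexity: it bounds the diameter of the fast-drift block $\Pi^{(s)}_M(p)$ directly from the defining inequalities of the resonant zone via the lattice-geometry Lemma \ref{giorgilli} (see Lemma \ref{lemma diam}), and obtains confinement by proving that the extended blocks form a partition invariant under the normal-form flow, by induction on the rank. For this quadratic $H_0$ your route is legitimate and, once the covering is fixed, would even let you bypass the non-overlapping lemma for the invariance statement; but it does not extend to the steep setting that motivates the paper's purely geometric argument.
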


When adding the remainder, one gets the limitation \eqref{times} on the
times.

Finally we remark that Lochak's proof applies to system \eqref{H}, but
we think that our approach to the geometric part of the proof is the
main interest of the present paper, since it is suitable for
generalizations to the steep case.

\vskip15truept

This paper originates from our research on the spectrum of Sturm
Liouville operators in general tori, which lead to a quantum version
of Nekhoroshev's theorem \cite{BLM19,BLM20}. When we were still lost
on how to construct a quantum analogue of the geometric part, we had
several very enlightening discussions with Antonio Giorgilli on the
classical Nekhoroshev's theorem. At the end we realized that the
quantum method we constructed had a classical counterpart which is the
content of the present paper. It is a pleasure to dedicate this paper
to Antonio Giorgilli in the occasion of his 70th birthday.

One of us, Dario Bambusi, would like to thank Antonio who introduced
him to science and in particular to the study of Hamiltonian dynamics:
his presence has always been fundamental and I would be a different
person if I had not met him. Thank you!

\noindent{\it Acknowledgements} We thank Laurent Niederman and David
Sauzin for pointing to our attention the relevant references on
Nekhoroshev Theorem in the smooth class. We acknowledge the support of
GNFM.

\section{Analytical part}

\subsection{Preliminaries and statement}

In this subsection we present the tools we will use in order to deal
with the $C^\infty$ context.

Having {fixed a parameter $0<\delta<\frac{1}{2}$}  and an interval 
$\cU=[0,\ep_0)$ with some positive $\ep_0$, we
give the following definition.

\begin{definition}
A family of functions $\left\{f_{\ep }\right\}_{\ep \in\cU}$,   $ f_\ep \in C^{\infty}(\R^d \times \T^d)$ will be said to be a
symbol of order $m$ if for all $\alpha, \beta \in \N^d$ there exists a
positive constant $C_{\alpha, \beta}$ such that
	\begin{equation} \label{ord m}
	\sup_{\ep \in {\cal U}} \sup_{p \in \R^d, q \in \T^d}
        \left|\frac{\partial_p^{\alpha} \partial_q^\beta f_\ep (p,
          q)}{\ep^{m}}\ep^{|\alpha|\delta }\right| \leq C_{\alpha, \beta}\,.
	\end{equation}
In this case we will denote  $f_\ep  \in {\cal P}^{m, \delta}$. We
will often omit the index $\ep $. 
\end{definition}
	\begin{remark} \label{semin vere}
It is immediate to see that $f \in {\cal P}^{m, \delta}$ if and only if for all
	 integers $N_1$ and $N_2$  there exists a positive constant $C^m_{N_1, N_2}$ such that
	\begin{equation} \label{def seminorme}
	\sup_{\ep \in {\cal U}} \sup_{\begin{subarray}{c}
		p \in \R^d,\ k \in \Z^d,\\ \alpha \in \N^d,\ |\alpha| = N_1
		\end{subarray}}  \left|\partial_p^{\alpha} \hat{f}_k(\ep, p)\right| |k|^{N_2} \ep^{-(m -|\alpha|\delta)}  \leq C^m_{N_1, N_2}\,,
	\end{equation}
	where 
	$$
	\hat{f}_k(\ep, p) =\frac{1}{(2 \pi)^d} \int_{\T^d} f_\ep (p, q) e^{-\im k \cdot q}, \quad \ep \in {\cal U}, \quad p \in \R^d\,
	$$
are the Fourier coefficients of $f$.
        \end{remark}
        
        \begin{remark}
          \label{fresch}
The space $\sy m$ endowed by the family of seminorms given by the
constants $C^m_{N_1,N_2}=C^m_{N_1,N_2}(f)$ of equation \eqref{def
  seminorme} is a Fr\'echet space. 
        \end{remark}

        \begin{remark}
	A direct computation shows that, if $f \in \sy{m_1}$ and $g
        \in \sy{m_2},$  then
	\begin{enumerate}
	\item $f + g \in \sy{\min \{ m_1, m_2\}}$ 
	\item  $f g \in \sy{m_1 + m_2}$
	\item the Poisson bracket, $\{ f, g\} \in \sy{m_1 + m_2
          -\delta}\,.$
	\end{enumerate}
	\end{remark}

        In the following, given a $C^\infty$ function $g$, we will denote by
$X_g$ its Hamiltonian vector field and by $\Phi_g^t$ the flow it
generates (which in our framework will always be globally defined). 

In order to state the analytic Lemma, we start by defining  what we
mean by normal form of order $N$. From now of{ we fix the number $N$
  controlling the number of steps in the normal form procedure.} 

Furthermore, we
will denote $$
a:=1-2\delta\ ;
$$
 we fix a positive (small) $0<\beta<1$ and we define 
\begin{equation}
  \label{kappa}
K=K(\ep):=\left[\frac{1}{\ep^\beta}\right]+1
\end{equation}
with the square bracket denoting the integer part. Eventually we will
link $ \beta$, $\delta$, $b$, $M$ and $N$.
\begin{definition} \label{normal form}
	A family of functions ${Z_{\ep}:\R^d \times \T^d \rightarrow
          \R}$ will be said to be in normal form if $${Z_{\ep}(p, q) =
          \sum_{ |k| \leq K} \hat{Z}_k(\ep, p) e^{\im k \cdot q} }$$
        with
	\begin{equation}
		\hat{Z}_k(\ep, p)  \neq 0 \Rightarrow |p \cdot k |
                \leq \ep^\delta\, , \quad \forall k \in \Z^d
                \backslash \{0\}\ ,
        	\end{equation}
Namely the $k$-th Fourier coefficient is supported in the resonant
region $\left|p\cdot k\right|\leq \ep^\delta$. 
\end{definition}

        \begin{lemma}
          \label{nor.for}(Normal Form Lemma) Consider the system
          \begin{equation}
            \label{H}
H_\ep:=H_0(p)+P_\ep(p,q)
            \end{equation}
with $H_0$ as in \eqref{H1} and $P_\ep\in \sy1$, then there exists a
canonical transformation ${{\cal T}}$ such that
          \begin{equation}
            \label{h.noma}
H_\ep\circ {{\cal T}}=H_0+\sum_{j=1}^N Z_j+\cR^{(N)}\ ,
          \end{equation}
          with $Z_j\in\sy{1+a(j-1)}$ in normal form and $\cR^{(N)}$
          s.t.
          \begin{align}
            \label{resti}
\sup_{\ep \in {\cal U}} \sup_{p \in \R^d, q \in \T^d}
        \left|\frac{\partial_p^{\alpha} \partial_q^\beta \cR^{(N)}(p,
        q)}{\ep^{1+Na}}\right|\ep^{|\alpha|\delta} \leq C_{\alpha,
          \beta}\, ,
        \\
                    \label{riresti}
\forall\alpha,\beta\in\N^d\times\N^d        \quad \text{with}\ |\alpha|+|\beta|\leq 1\ .
\end{align}
        Furthermore, given a symbol $f\in\sy m$,
        define 
$\cR_f:=f\circ {{\cal T}}-f $, then one has
       \begin{equation} \label{restir}
	 \sup_{p \in \R^d, q \in \T^d}
        \left|\cR_f(\ep, p,
        q)\right| \leq C \ep ^{m+1-2\delta}\ . 
       \end{equation}
       In the case $f=p_j$, $j=1,...,d$, one has 
  \begin{equation} \label{restip}
	 \sup_{p \in \R^d, q \in \T^d}
        \left|\cR_{p_j}(\ep, p,
        q)\right| \leq C \ep ^{1-\delta}\ . 
  \end{equation}
        \end{lemma}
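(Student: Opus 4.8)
The plan is to run the classical iterative Lie-transform normalization scheme; the only new point is that the homological equation is solved after localizing each Fourier coefficient in the resonant region, and that all errors are controlled by the graded symbol calculus of the classes $\sy{\cdot}$. Concretely, I would fix an even cutoff $\psi\in C^\infty(\R;[0,1])$ with $\psi\equiv1$ on $[-1/2,1/2]$ and $\mathrm{supp}\,\psi\subset[-1,1]$, and, given $Q\in\sy m$ with $Q=\sum_k\hat Q_k(\ep,p)e^{\im k\cdot q}$, set
\[
Z[Q]:=\hat Q_0+\sum_{0<|k|\le K}\psi(p\cdot k/\ep^\delta)\,\hat Q_k\,e^{\im k\cdot q},\qquad
\cR[Q]:=\sum_{|k|>K}\hat Q_k\,e^{\im k\cdot q},
\]
so that $Q-Z[Q]-\cR[Q]=\sum_{0<|k|\le K}\bigl(1-\psi(p\cdot k/\ep^\delta)\bigr)\hat Q_k\,e^{\im k\cdot q}$ is the nonresonant, low-frequency part. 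Since $\nabla_pH_0=p$ one has $\{H_0,e^{\im k\cdot q}\}=\im\,(p\cdot k)\,e^{\im k\cdot q}$, so the homological equation $Q+\{H_0,\chi[Q]\}=Z[Q]+\cR[Q]$ is solved by
\[
\chi[Q]:=-\sum_{0<|k|\le K}\frac{1-\psi(p\cdot k/\ep^\delta)}{\im\,(p\cdot k)}\,\hat Q_k(\ep,p)\,e^{\im k\cdot q},
\]
which is well defined because $|p\cdot k|\ge\ep^\delta/2$ on the support of $1-\psi(p\cdot k/\ep^\delta)$. I would then verify three facts: $Z[Q]\in\sy m$ is in normal form in the sense of Definition~\ref{normal form} (its frequency support is contained in the region $|p\cdot k|\le\ep^\delta$ since $\mathrm{supp}\,\psi\subset[-1,1]$), $\chi[Q]\in\sy{m-\delta}$, and $\cR[Q]\in\sy{m'}$ for every $m'$. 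The last two are exactly where the $C^\infty$ hypothesis is used: by Remark~\ref{semin vere} the coefficients $\hat Q_k$ decay faster than any power of $|k|$, so the powers of $|k|$ produced when a $p$-derivative falls on $(p\cdot k)^{-1}$ or on $\psi(p\cdot k/\ep^\delta)$ are harmless, while each $p$-derivative costs only a factor $\ep^{-\delta}$ (from $|p\cdot k|^{-1}\le2\ep^{-\delta}$ on the relevant support); and since $K\sim\ep^{-\beta}$ by \eqref{kappa}, summing the rapidly decaying $\hat Q_k$ over $|k|>K$ makes $\cR[Q]$, together with all its derivatives, smaller than any power of $\ep$.

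The second ingredient is a symbol-calculus statement for the normalizing flows: if $\chi\in\sy\sigma$ with $\sigma>\delta$, then for $|t|\le1$ the flow $\Phi_\chi^t$ is globally defined, the map $g\mapsto g\circ\Phi_\chi^t$ sends $\sy m$ into itself with seminorms bounded uniformly in $t$ and $\ep$, and $g\circ\Phi_\chi^t-g\in\sy{m+\sigma-\delta}$. Global existence is immediate since, at fixed $\ep$, the field $X_\chi=(\pa_q\chi,-\pa_p\chi)$ is smooth and bounded on $\R^d\times\T^d$; the last assertion follows from $g\circ\Phi_\chi^t-g=\int_0^t\{g,\chi\}\circ\Phi_\chi^s\,ds$ once the invariance is known; and the invariance is proved by differentiating the variational equation $\tfrac{d}{dt}D\Phi_\chi^t=DX_\chi(\Phi_\chi^t)\,D\Phi_\chi^t$ and inducting on the order of the derivative, using the product and Poisson-bracket rules for $\sy{\cdot}$ recalled above. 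I expect this to be the main obstacle of the whole proof. Indeed, when $\delta$ is close to $1/2$ the higher $p$-derivatives of $\chi$ need not be bounded uniformly in $\ep$ (already $\pa_p^2\chi=O(\ep^{\sigma-2\delta})$), so the entries of $D\Phi_\chi^t$ need not be either, and one has to observe that these potentially large quantities enter the variational equation only as forcing terms coupling different blocks, whereas the self-coupling coefficients are $O(\ep^{\sigma-\delta})$ with $\sigma-\delta>0$; hence no exponential amplification occurs and the growth in negative powers of $\ep^\delta$ is exactly the one allowed by the grading of $\sy{\cdot}$.

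Granting these two tools, the normalization is a routine induction. I would set $H^{(1)}:=H_\ep$, $P^{(1)}:=P_\ep\in\sy1$, $W^{(1)}:=0$, and, for $j=1,\dots,N$, put $\chi_j:=\chi[P^{(j)}]\in\sy{1+a(j-1)-\delta}$ (of order $\ge1-\delta>\delta$, so that the flow statement applies), $Z_j:=Z[P^{(j)}]\in\sy{1+a(j-1)}$ in normal form, $W^{(j+1)}:=W^{(j)}+Z_j$, and, expanding $H^{(j)}\circ\Phi_{\chi_j}^1$ by Taylor to second order in $t$ and using $\{H_0,\chi_j\}=Z_j+\cR[P^{(j)}]-P^{(j)}$,
\[
H^{(j)}\circ\Phi_{\chi_j}^1=H_0+W^{(j+1)}+P^{(j+1)},
\]
where
\[
P^{(j+1)}:=\cR[P^{(j)}]+\{W^{(j)}+P^{(j)},\chi_j\}+\int_0^1(1-t)\,\{\{H^{(j)},\chi_j\},\chi_j\}\circ\Phi_{\chi_j}^t\,dt.
\]
Because each Poisson bracket lowers the order by $\delta$ while multiplying it, and composition with $\Phi_{\chi_j}^t$ preserves the classes, every term on the right lies in $\sy{1+aj}$: the tail $\cR[P^{(j)}]$ because it belongs to all $\sy{m'}$, and the others by computing the orders with $a=1-2\delta$ (for instance $\{W^{(j)}+P^{(j)},\chi_j\}$ has order $2+a(j-1)-2\delta=1+aj$). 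Hence $P^{(j)}\in\sy{1+a(j-1)}$ for all $j$ by induction. Setting $\cT:=\Phi_{\chi_1}^1\circ\cdots\circ\Phi_{\chi_N}^1$ (canonical, as a composition of Hamiltonian flows), one obtains $H_\ep\circ\cT=H^{(N+1)}=H_0+\sum_{j=1}^N Z_j+\cR^{(N)}$ with $\cR^{(N)}:=P^{(N+1)}\in\sy{1+aN}$, which is \eqref{h.noma} and, a fortiori, \eqref{resti}--\eqref{riresti}.

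Finally, for \eqref{restir}--\eqref{restip} I would telescope along $\cT$, using $\tfrac{d}{dt}(f\circ\Phi_{\chi_j}^t)=\{f,\chi_j\}\circ\Phi_{\chi_j}^t$:
\[
\cR_f=f\circ\cT-f=\sum_{j=1}^N\Bigl(\int_0^1\{f,\chi_j\}\circ\Phi_{\chi_j}^t\,dt\Bigr)\circ\Phi_{\chi_{j+1}}^1\circ\cdots\circ\Phi_{\chi_N}^1.
\]
Since composition with diffeomorphisms of $\R^d\times\T^d$ preserves the sup norm and $\{f,\chi_j\}\in\sy{m+1+a(j-1)-2\delta}$ for $f\in\sy m$, the $j=1$ term dominates and $\norma{\cR_f}_{C^0}\le C\ep^{m+1-2\delta}$, which is \eqref{restir}. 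The function $p_j$ is not a symbol, but $\{p_j,\chi_l\}=\pa_{q_j}\chi_l$ is (it lies in $\sy{1+a(l-1)-\delta}$), so the same telescoping gives $\norma{\cR_{p_j}}_{C^0}\le C\ep^{1-\delta}$, which is \eqref{restip}.
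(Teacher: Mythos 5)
Your proposal is correct and its skeleton coincides with the paper's: the same cutoff decomposition into resonant part, nonresonant part and high\mbox{-}frequency tail, the same cohomological equation solved on the support of $1-\chi_k$ where $|p\cdot k|\geq \ep^\delta/2$, and the same iterative Lie-transform scheme with gain $a=1-2\delta$ per step. The one genuine divergence is in how the Lie transform is expanded. The paper expands $f\circ\Phi^1_g$ in Taylor series \emph{to order $N$ at every step} (Lemma \ref{resto.Lie}); the explicit terms $f_{(l)}$ are iterated Poisson brackets, hence symbols by pure algebra, while the integral remainder is of order $\ep^{1+Na}$ already and is dumped once and for all into $\cR^{(N)}$, so it only ever needs the $C^1$ bounds \eqref{resti}--\eqref{riresti} and the flow $\Phi^t_g$ only needs to be controlled at the level of its first differential. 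You instead expand only to second order, so your integral remainder $\int_0^1(1-t)\poi{\poi{H^{(j)}}{\chi_j}}{\chi_j}\circ\Phi^t_{\chi_j}\,dt$ is fed back into $P^{(j+1)}$ and must itself be a full symbol; this forces you to prove that $g\mapsto g\circ\Phi^t_\chi$ maps $\sy m$ into itself with all seminorms under control. That lemma is true, and you correctly identify both the danger ($\partial_p^2\chi=O(\ep^{\sigma-2\delta})$ makes $D\Phi^t_\chi$ unbounded in $\ep$) and the cure (in the weighted variables where $\partial_pQ$ carries a factor $\ep^{\delta}$, the variational matrix is uniformly $O(\ep^{\sigma-\delta})$ with $\sigma-\delta>0$, so Gronwall applies and each $p$-derivative of the flow costs exactly one factor $\ep^{-\delta}$, which is what the grading of $\sy{\cdot}$ tolerates); but carrying this through Fa\`a di Bruno for derivatives of all orders is a nontrivial induction that the paper deliberately avoids, and it is the one debt in your write-up that you acknowledge rather than discharge. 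Your approach buys a cleaner, more standard-looking iteration (a genuine symbol calculus stable under the normalizing flows, which would also upgrade \eqref{resti} to all $\alpha,\beta$); the paper's buys a shorter proof at the price of an order-$N$ expansion at each of the $N$ steps. The telescoping arguments for \eqref{restir} and \eqref{restip} are fine and match the paper's intent.
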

        \begin{definition}
          \label{resti.1}
A function fulfilling equation \eqref{resti}, \eqref{riresti} will be
said to be a remainder of order $N$, or simply a remainder.
        \end{definition}

The proof of Lemma \ref{nor.for} consists of a few steps: first we
give a decomposition of an arbitrary symbol in a normal form part, a
nonresonant part and a remainder, then we remove the nonresonant part
of the perturbation and then we iterate. The canonical transformation
used to remove the nonresonant part will be constructed using the Lie
transform method, namely by using the time one flow of an auxiliary
Hamiltonian. This requires the study of the Lie transform in our
$C^\infty$ context. We will also have to solve the cohomological
equation in order to construct the auxiliary Hamiltonian. Finally we
state and prove the iterative Lemma which is the last step of the
proof of the Normal Form Lemma.

\subsection{Cutoffs and splittings}
        
	Let us consider an even $C^\infty$ function $\eta: \R
        \rightarrow \R^+$ such that $\eta(t)  \equiv 1$ if
        $\displaystyle{|t| \leq \frac{1}{2}}$ and $\eta(t) \equiv 0$
        if $\displaystyle{|t| \geq 1\,.}$ For all $k \in \Z^d$ such
        that $0\not= |k| \leq K,$ we define the cut-off function
	\begin{equation}
	\begin{gathered}
		\chi_k(p) = \eta\left( \frac{p \cdot k}{\ep^\delta}\right), \quad p \in \R^d\,,
	\end{gathered}
	\end{equation}
        which is thus supported in $\left|p\cdot k\right|\leq
        \ep ^{\delta}$. 
Consider a smooth family of functions $f_\ep \in\sy m$,
        $f_\ep (p,q)=\sum_{k\in\Z^d}\hat f_k(\ep , p)e^{\im
          k\cdot q}$, we perform for ${f_\ep}$ the following decomposition:
	\begin{equation} \label{dec f}
	f(p ,q) =  f^\res(p, q) + f^\nr(p, q) + f_K(p, q)\,,
	\end{equation}
	where
	\begin{equation}\label{f ris non ris}
	\begin{gathered}
	f^\res(p, q) =  \sum_{0<|k| \leq K} \hat{f}_k(\ep, p)
        \chi_k(p) e^{\im k \cdot q}+\hat f_0(\ep ,p),\\
	f^\nr(p, q) =  \sum_{0<|k| \leq K} \hat{f}_k(\ep, p) \left(1 -\chi_k(p)\right) e^{\im k \cdot q}\,,\\
	f_K (p, q) =  \sum_{|k| >K} \hat{f}_k(\ep, p) e^{\im k \cdot q}\,.
	\end{gathered}
	\end{equation}
        \begin{remark}
          \label{symboli}
If $f\in\sy m$ then $f^{\res},f^{\nr}\in\sy m$. Furthermore $f^{\res}$
is in normal form.  
        \end{remark}
        \begin{lemma}
          \label{restocutoff}
Let $f\in \sy1$, then $f_K\in \sy {1+Na}$, so in particular it is a
remainder in the sense that it fulfills equation \eqref{resti} \eqref{riresti}. 
        \end{lemma}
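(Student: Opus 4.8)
The plan is to exploit the fact that $f_K$ retains only the Fourier modes with $|k|>K(\ep)\ge\ep^{-\beta}$: the super-polynomial decay of the Fourier coefficients of an order-$1$ symbol then turns this high-frequency truncation into an arbitrarily large positive power of $\ep$, which is exactly what the index $1+Na$ records. First I would use the equivalent description of $\sy1$ given in Remark \ref{semin vere}: for all integers $N_1,N_2$ there is $C^1_{N_1,N_2}$ with
\[
\bigl|\partial_p^{\alpha}\hat f_k(\ep,p)\bigr|\le C^1_{N_1,N_2}\,\ep^{1-|\alpha|\delta}\,|k|^{-N_2}\qquad \forall\,p\in\R^d,\ k\in\Z^d,\ \ep\in\cU,\ |\alpha|=N_1\ .
\]
Taking $N_2$ large, this makes the series $f_K(p,q)=\sum_{|k|>K}\hat f_k(\ep,p)e^{\im k\cdot q}$ and all its $p,q$-derivatives converge absolutely and uniformly, so that $\partial_p^{\alpha}\partial_q^{\beta}f_K=\sum_{|k|>K}(\im k)^{\beta}\,\partial_p^{\alpha}\hat f_k(\ep,p)\,e^{\im k\cdot q}$ for every $\alpha,\beta\in\N^d$.

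Next, fixing $\alpha,\beta$, I would set $r:=\lceil Na/\beta\rceil$ and $N_2:=|\beta|+d+1+r$, and estimate, using $|(\im k)^{\beta}|\le|k|^{|\beta|}$ together with $|k|^{-r}\le K^{-r}$ for $|k|>K$,
\[
\bigl|\partial_p^{\alpha}\partial_q^{\beta}f_K(p,q)\bigr|\le C^1_{|\alpha|,N_2}\,\ep^{1-|\alpha|\delta}\sum_{|k|>K}|k|^{-d-1}|k|^{-r}\le C^1_{|\alpha|,N_2}\,C_d\,\ep^{1-|\alpha|\delta}\,K^{-r}\ ,
\]
where $C_d:=\sum_{k\in\Z^d\setminus\{0\}}|k|^{-d-1}<\infty$. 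Since $K=K(\ep)\ge\ep^{-\beta}$ and $\beta r\ge Na$, one gets $K^{-r}\le\ep^{\beta r}\le C\,\ep^{Na}$ for all $\ep\in\cU$ (with $C=1$ if $\ep_0\le1$), and therefore
\[
\bigl|\partial_p^{\alpha}\partial_q^{\beta}f_K(p,q)\bigr|\le C_{\alpha,\beta}\,\ep^{(1+Na)-|\alpha|\delta}\ ,
\]
which is precisely \eqref{ord m} with $m=1+Na$.

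This proves $f_K\in\sy{1+Na}$; restricting the last estimate to $|\alpha|+|\beta|\le1$ one reads off \eqref{resti}--\eqref{riresti}, so that $f_K$ is in particular a remainder of order $N$ in the sense of Definition \ref{resti.1}. I do not expect any genuine obstacle here: the only point to keep track of is that, for each fixed pair $(\alpha,\beta)$, the order of Fourier decay $N_2$ that we borrow from Remark \ref{semin vere} may be chosen as large as we wish, and it is exactly this freedom --- combined with the lower bound $K(\ep)\ge\ep^{-\beta}$ --- that upgrades the high-frequency cutoff $f_K$ to an order-$(1+Na)$ symbol.
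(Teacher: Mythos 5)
Your proof is correct and rests on exactly the same mechanism as the paper's: trading a high enough power of $|k|^{-1}$ (there $N_3>Na/\beta$, here $r=\lceil Na/\beta\rceil$) for the factor $K^{-r}\leq\ep^{Na}$ coming from $K\geq\ep^{-\beta}$. The only cosmetic difference is that you verify the defining estimate \eqref{ord m} directly by summing the Fourier series, whereas the paper works entirely at the level of the equivalent seminorms \eqref{def seminorme} of Remark \ref{semin vere}.
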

\proof This is related to the fact that the Fourier coefficients of a
$C^\infty$ function decrease faster than any power of $|k|^{-1}$. Formally we have
to bound the following seminorms
\begin{align}
\nonumber C^{1+Na}_{N_1,N_2}(f_K)=	\sup_{\ep} \sup_{p,|k|> K, |\alpha| = N_1
		}  \left|\frac{\partial_p^{\alpha}
  \hat{f}_k(\ep, p)|k|^{N_2} \ep ^{|\alpha|\delta}}{\ep ^{1+Na}}\right|
\\
\label{sti}
\leq \sup_{\ep} \sup_{p,|k|> K, |\alpha| = N_1
		}  \left|\frac{\partial_p^{\alpha}
  \hat{f}_k(\ep, p)|k|^{N_2+N_3}
  \ep ^{|\alpha|\delta}}{\ep ^{1+Na}K^{N_3}}\right|
\end{align}
and, choosing $N_3>Na/\beta$, one has $K^{N_3}\ep ^{Na}>1$ and thus
$$
\left|\eqref{sti}\right|\leq \sup_{\ep} \sup_{p,|k|> K, |\alpha| = N_1
		}  \left|\frac{\partial_p^{\alpha}
  \hat{f}_k(\ep, p)|k|^{N_2+N_3}
  \ep ^{|\alpha|\delta}}{\ep }\right|=C^{1}_{N_1,N_2+N_3}(f)
$$
which is the thesis.\qed

\subsection{Lie Transform and cohomological equation}\label{Lie}

\begin{definition}
  \label{Lie transform}
Given a function $g\in\sy m$, with $m\geq0$, the time one flow $\Phi^1_g\equiv
\Phi^t_g\big|_{t=1} $ will be called \emph{Lie transform} generated by
$g$. 
\end{definition}
Given a function $f\in\sy {m_1}$, we study $f\circ\Phi^1_g$. To this
end define the sequence $f_{(l)}$, by
\begin{equation}
  \label{seqlie}
f_{(0)}:=f\ ,\quad f_{(l)}:=\poi{f_{(l-1)}}{g}\equiv
\left.\frac{d^l}{dt^l}\right|_{t=0} f\circ\Phi^t_g\ , \quad l\geq1\ ,
\end{equation}
and remark that $f_{(l)}\in\sy {m_1+l(m-\delta)}$ if $g\in\sy m$. 
We have the following lemma.

\begin{lemma}
  \label{resto.Lie}
  Let $g\in\sy{m_2}$ and $f\in\sy{m_1}$, with $m_2\geq 1-\delta$ and
  $m_1\geq 1$ then one has
  \begin{equation}
    \label{lie.tot}
f\circ\Phi^1_g=\sum_{l=0}^N\frac{f_{(l)}}{l!}+\cR^{(N)}_{Lie}\ ,
  \end{equation}
  with $\cR^{(N)}_{Lie}$ a remainder, in the sense that it fulfills
  equation \eqref{resti} and \eqref{riresti}.
\end{lemma}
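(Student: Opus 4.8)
The plan is to obtain \eqref{lie.tot} from Taylor's formula with integral remainder applied to the smooth scalar function $t\mapsto f\circ\Phi^t_g$ on $[0,1]$. By the very definition \eqref{seqlie} of the sequence $f_{(l)}$, together with the identity $\frac{d}{dt}(F\circ\Phi^t_g)=\{F;g\}\circ\Phi^t_g$ (a consequence of the group property of the flow and of the paper's sign convention), an immediate induction gives $\frac{d^l}{dt^l}(f\circ\Phi^t_g)=f_{(l)}\circ\Phi^t_g$ for every $l\ge0$; evaluating at $t=1$ yields \eqref{lie.tot} with
\[
\cR^{(N)}_{Lie}=\frac{1}{N!}\int_0^1(1-t)^N\,\bigl(f_{(N+1)}\circ\Phi^t_g\bigr)\,dt\ .
\]
So the whole statement reduces to bounding, uniformly for $t\in[0,1]$ and for $|\alpha|+|\beta|\le1$, the weighted derivative $\ep^{|\alpha|\delta}\,\partial_p^\alpha\partial_q^\beta\bigl(f_{(N+1)}\circ\Phi^t_g\bigr)$ by $C\,\ep^{1+Na}$; integrating such a bound in $t$ then gives \eqref{resti} and \eqref{riresti} for $\cR^{(N)}_{Lie}$.

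To prove that bound I would use two ingredients. First, by the remark preceding the statement $f_{(N+1)}\in\sy{m_1+(N+1)(m_2-\delta)}$, and since $m_1\ge1$ and $m_2-\delta\ge1-2\delta=a>0$, this is contained in $\sy{1+(N+1)a}$: so $f_{(N+1)}$ is already of the target order $1+Na$, with one spare power $\ep^a$. Second, I need estimates on the flow $\Phi^t_g$ and its first differential. Global existence for all $t\in\R$ holds because $X_g=(-\partial_q g,\partial_p g)$ is globally bounded together with its derivatives (here $m_2\ge1-\delta$ already makes $X_g$ small), and one gets $|\Phi^t_g-\mathrm{Id}|=O(\ep^{m_2-\delta})$. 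For the Jacobian I would differentiate $\Phi^t_g=\mathrm{Id}+\int_0^tX_g\circ\Phi^s_g\,ds$ in the initial point and apply Gronwall to the variational equation. The only coefficient that is not $o(1)$ is $\partial_p^2 g=O(\ep^{m_2-2\delta})$, a priori a negative power of $\ep$; the remedy is to conjugate the variational equation by the diagonal rescaling that assigns $p$-derivatives their natural weight $\ep^\delta$ in the symbol calculus, after which all its coefficients become $O(\ep^{m_2-\delta})=o(1)$ and Gronwall gives, uniformly on $[0,1]$, $\partial p(t)/\partial p_0=\mathrm{Id}+O(\ep^a)$, $\partial q(t)/\partial q_0=\mathrm{Id}+O(\ep^a)$, $\partial p(t)/\partial q_0=O(\ep^{m_2})$ and the delicate block $\partial q(t)/\partial p_0=O(\ep^{m_2-2\delta})$.

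With these in hand the remaining step is the chain rule; since only first derivatives enter, no higher-order (Fa\`a di Bruno) combinatorics is needed. Each summand of $\partial_p^\alpha\partial_q^\beta(f_{(N+1)}\circ\Phi^t_g)$ is a first derivative of $f_{(N+1)}$ — of order at least $1+(N+1)a-|\alpha'|\delta$ with $|\alpha'|\le|\alpha|$ — evaluated along the flow, times one entry of $D\Phi^t_g$. The summand to worry about is $\partial_q f_{(N+1)}\cdot\bigl(\partial q/\partial p_0\bigr)$, which appears when differentiating in $p$: weighted by $\ep^\delta$ it has order at least $\delta+(1+(N+1)a)+(m_2-2\delta)=1+(N+1)a+(m_2-\delta)\ge1+(N+2)a$, using $m_2\ge1-\delta$; every other summand loses at most one factor $\ep^{-\delta}$, which is exactly compensated by the weight $\ep^{|\alpha|\delta}$, and all of them keep at least the spare $\ep^a$, so that $\ep^{|\alpha|\delta}\,\partial_p^\alpha\partial_q^\beta(f_{(N+1)}\circ\Phi^t_g)=O(\ep^{1+Na})$ uniformly in $t$, as wanted. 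I expect the real obstacle to be precisely the handling of the block $\partial q/\partial p_0=O(\ep^{m_2-2\delta})$ — potentially large — and the check that the single spare $\ep^a$ coming from $f_{(N+1)}\in\sy{1+(N+1)a}$, together with the hypotheses $m_1\ge1$, $m_2\ge1-\delta$ and $\delta<\tfrac12$, is enough to absorb it; the rest is routine.
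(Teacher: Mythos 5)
Your proposal is correct and follows essentially the same route as the paper: Taylor's formula with integral remainder in $t$, the observation that $f_{(N+1)}\in\sy{1+(N+1)a}$ already has the right order for the sup bound, and a Gronwall estimate on $d\Phi^t_g$ for the first derivatives (incidentally, your kernel $(1-t)^N$ is the correct one; the paper's $(1+s)^N$ is a typo). Your treatment of the Jacobian is in fact more careful than the paper's, which simply asserts $\norma{dX_g}\leq \ep^{m_2-\delta}$ and thereby glosses over the block $\partial_p^2 g=O(\ep^{m_2-2\delta})$, which need not be small for $\delta$ close to $\tfrac12$; your anisotropic rescaling by $\ep^{\delta}$ of the $p$-directions, followed by Gronwall and the check that the weight $\ep^{|\alpha|\delta}$ absorbs the large block $\partial q/\partial p_0$, is exactly the right way to make that step rigorous.
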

\proof Use the formula for the remainder of the Taylor series (in
time); this gives
$$
f\circ\Phi^1_g=\sum_{l=0}^N\frac{f_{(l)}}{l!}+ \frac{1}{N!}\int_0^1
(1+s)^Nf_{(N+1)}\circ\Phi^s_gds\ . 
$$
Of course the integral term is $\cR^{(N)}_{Lie} $. To estimate its
supremum it is immediate. To estimate its first differential remark
that
$$
d(f_{(N+1)}\circ\Phi^s_g)=df_{(N+1)}(\Phi^s_g)\circ d \Phi^s_g\ .
$$
Then, from the very definition of the flow one has that its
differential fulfills 
$$
\frac{d}{dt }d \Phi^t_g=dX_g(\Phi^t_g)d \Phi^t_g\ ,
$$
which is estimated by
$$
\norma{\frac{d}{dt }d \Phi^t_g}\leq \ep^{m_2-\delta}\norma {d
  \Phi^t_g}\ ,
$$
where we used the fact that $g$ is a symbol. From this it follows
that, provided $\ep$ is small enough one has $\norma {d
  \Phi^t_g}\leq 2 $ for $|t|\leq 1$.

From this and from the fact that $f_{(l+1)}$ is a symbol the thesis
immediately follows. \qed

Concerning the cohomological equation we have the following simple
lemma

\begin{lemma}
  \label{cohomo}
  Let $f\in \sy m$ and consider the cohomological equation
  \begin{equation}
    \label{cohomol}
\poi{H_0}{g}+f^{\nr}=0\ .
  \end{equation}
  It admits a solution $g\in\sy{m-\delta}$.
\end{lemma}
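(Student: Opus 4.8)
The cohomological equation $\poi{H_0}{g} + f^\nr = 0$ is solved componentwise in the Fourier variable. Writing $g = \sum_k \hat g_k(\ep,p)e^{\im k\cdot q}$ and using $\poi{H_0}{g} = \sum_k \im (p\cdot k)\,\hat g_k(\ep,p)\,e^{\im k\cdot q}$ (since $H_0=\sum_j p_j^2/2$ so $\partial_{p_j}H_0 = p_j$), the equation reduces to
\begin{equation*}
\im(p\cdot k)\,\hat g_k(\ep,p) = -\hat f_k(\ep,p)\bigl(1-\chi_k(p)\bigr)\ ,\qquad 0<|k|\le K\ ,
\end{equation*}
and $\hat g_0 := 0$. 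The key point is that on the support of $1-\chi_k$ one has $|p\cdot k|\ge \tfrac12\ep^\delta>0$, so the divisor never vanishes there and we may simply set
\begin{equation*}
\hat g_k(\ep,p) := \frac{\im\,\hat f_k(\ep,p)\,\bigl(1-\chi_k(p)\bigr)}{p\cdot k}\ ,\qquad 0<|k|\le K\ .
\end{equation*}

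First I would check this is well defined and smooth: the function $(1-\chi_k(p))/(p\cdot k)$ is $C^\infty$ in $p$ because $1-\chi_k$ vanishes identically near the zero set of $p\cdot k$, so the quotient extends smoothly by zero there; multiplying by the $C^\infty$ symbol $\hat f_k$ keeps it smooth, and the construction involves only finitely many $k$ (those with $0<|k|\le K$), so $g\in C^\infty(\R^d\times\T^d)$. By construction $g^\nr$ is the only part present (the $k=0$ and the cut-off-killed pieces are absent), and plugging back in verifies \eqref{cohomol}.

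The substantive step is the symbol estimate $g\in\sy{m-\delta}$, i.e. bounding the seminorms \eqref{def seminorme} of the $\hat g_k$. Differentiating the quotient, each $p$-derivative either hits $\hat f_k$ — gaining a factor $\ep^\delta$ in the numerator by the hypothesis $f\in\sy m$ — or hits $1-\chi_k(p)=1-\eta(p\cdot k/\ep^\delta)$, producing $\eta'(\cdot)\,k/\ep^\delta$, whose sup-norm grows like $|k|\,\ep^{-\delta}$ (and higher derivatives like $|k|^j\ep^{-j\delta}$), or hits the denominator $p\cdot k$, where on the support $|p\cdot k|\ge\tfrac12\ep^\delta$ each such differentiation costs a factor $|k|\cdot(\ep^\delta)^{-1}$ up to constants. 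In every case one net power $(p\cdot k)^{-1}\sim\ep^{-\delta}$ is present from the leading term, and every additional $p$-derivative is balanced: the derivative of $\hat f_k$ brings $\ep^{+\delta}$ precisely cancelling the $\ep^{-\delta}$ that the derivative would otherwise cost. The powers of $|k|$ thus generated are harmless because $f\in\sy m$ controls $|k|^{N_2}\partial_p^\alpha\hat f_k$ for every $N_2$, and the leading power of $\ep$ drops from $\ep^m$ to $\ep^{m-\delta}$ exactly because of the single factor $(p\cdot k)^{-1}$. I would organize this via the Leibniz rule for $\partial_p^\alpha\bigl(\hat f_k\cdot(1-\chi_k)\cdot(p\cdot k)^{-1}\bigr)$, estimating $\partial_p^\beta\bigl((1-\chi_k(p))(p\cdot k)^{-1}\bigr)$ on its support by $C_\beta\,|k|^{|\beta|}\ep^{-(|\beta|+1)\delta}$, and then summing against the $\sy m$ bounds on $\hat f_k$ with enough decay in $|k|$ to absorb everything; the finitely-many-$k$ truncation means there is nothing to sum anyway, only a uniform bound to extract.

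I expect the only mild obstacle to be the bookkeeping in the last estimate, namely verifying uniformly in $k$ (with $0<|k|\le K$) and $\ep$ that the derivative of $(1-\chi_k)/(p\cdot k)$ restricted to its support obeys the claimed $\ep$- and $|k|$-powers; this is a routine Faà di Bruno / Leibniz computation once one notes that $\eta\in C^\infty$ with all derivatives bounded and that $1-\chi_k$ is supported where $|p\cdot k|\ge\tfrac12\ep^\delta$. Everything else — smoothness, solvability, the shift $m\mapsto m-\delta$ — is immediate from the explicit formula.
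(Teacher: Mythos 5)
Your proposal is correct and follows exactly the paper's proof: expand in Fourier, divide by $-\im\, p\cdot k$ on the support of $1-\chi_k$ where $|p\cdot k|\ge \ep^\delta/2$, and check the seminorms. You actually supply more detail than the paper (which dismisses the symbol estimate with ``the result follows''), and your Leibniz bookkeeping for $\partial_p^\beta\bigl((1-\chi_k)(p\cdot k)^{-1}\bigr)$ is the right way to justify the shift $m\mapsto m-\delta$.
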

\proof Expanding in Fourier series, the cohomological equation takes
the form
\begin{align*}
\sum_{j}\im \frac{\partial H_0}{\partial p_j}\sum_{0<|k|\leq K} \im
k_j \hat g_k(p,\ep)e^{\im k\cdot q}=- \sum_{0<|k|\leq K}  \hat
f_k(p,\ep)(1-\chi_k(p,\ep ) )e^{\im k\cdot q}\ ,
\end{align*}
whose solution is 
$$
\hat g_k(p,\ep)=\frac{ \hat
f_k(p,\ep)(1-\chi_k(p,\ep) )}{-\im p\cdot k}\ .
$$
Since $1-\chi_k $ is supported in the region $|p\cdot k|\geq
\ep^\delta/2$, the result follows. \qed 

\subsection{Iterative Lemma}\label{itera}

In this subsection we prove the following lemma.

\begin{lemma}
  \label{interami}
  Let $\ell<N$ be an integer, and let $H^{(\ell)}$ be of the form
  \begin{equation}
    \label{iter.1}
H^{(\ell)}=H_0+Z^{(\ell)}+f_\ell+\cR^{(N)}_{\ell}\ ,
  \end{equation}
  with $Z^{(\ell)}\in\sy1$ in normal form, $f_{\ell}\in\sy{m_\ell}$ with
  $$
m_\ell:=1+\ell a
$$ and $\cR^{(N)}_\ell$ a remainder.

  Then there exists a symbol $g_{\ell+1}\in\sy{m_\ell-\delta}$ which
  generates a Lie transform $\Phi_{g_{\ell+1}}^1$ with the property
  that $H^{(\ell+1)}:=H^{(\ell)}\circ\Phi_{g_{\ell+1}}^1 $ fulfills
  the assumption of the lemma with $\ell+1$ in place of $\ell$.
\end{lemma}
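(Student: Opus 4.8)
The plan is to perform one normalization step via a Lie transform, following the standard Deprit/Lie series scheme but keeping careful track of the symbol orders. First I would split the obstructing term $f_\ell\in\sy{m_\ell}$ according to the decomposition \eqref{dec f}, writing $f_\ell=f_\ell^\res+f_\ell^\nr+(f_\ell)_K$. By Remark \ref{symboli} the pieces $f_\ell^\res,f_\ell^\nr$ are again in $\sy{m_\ell}$ and $f_\ell^\res$ is in normal form, while by Lemma \ref{restocutoff} (applied after noting $m_\ell\geq 1$, so $(f_\ell)_K$ is at least as regular as the $\sy1$ case) the tail $(f_\ell)_K$ is a remainder of order $N$. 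Then I would invoke Lemma \ref{cohomo} with $f=f_\ell$ (more precisely with $m=m_\ell$) to obtain a generator $g_{\ell+1}\in\sy{m_\ell-\delta}$ solving the cohomological equation $\poi{H_0}{g_{\ell+1}}+f_\ell^\nr=0$. Since $m_\ell-\delta=1+\ell a-\delta\geq 1-\delta$, the generator meets the hypothesis $m_2\geq 1-\delta$ of Lemma \ref{resto.Lie}, so the Lie transform $\Phi^1_{g_{\ell+1}}$ is well defined and globally so.

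Next I would compute $H^{(\ell+1)}:=H^{(\ell)}\circ\Phi^1_{g_{\ell+1}}$ term by term using Lemma \ref{resto.Lie}. For the main term $H_0$, expand $H_0\circ\Phi^1_{g_{\ell+1}}=H_0+\poi{H_0}{g_{\ell+1}}+\sum_{l=2}^N\frac{(H_0)_{(l)}}{l!}+\cR_{Lie}$; the first Poisson bracket cancels $f_\ell^\nr$ by the cohomological equation, and the higher brackets $(H_0)_{(l)}$ for $l\geq 2$ live in $\sy{1+l(m_\ell-\delta)-\delta}$, hence in $\sy{m_{\ell+1}}$ or better once one checks $1+2(m_\ell-\delta)-\delta\geq m_{\ell+1}=1+(\ell+1)a$, i.e. $2m_\ell-3\delta\geq (\ell+1)a$; since $m_\ell=1+\ell a\geq 1$ and $a=1-2\delta$, this is $2+2\ell a-3\delta\geq 1+(\ell+1)a$, equivalently $1+(\ell-1)a\geq 3\delta$, which holds for $\ell\geq 1$ provided $\delta<1/3$ (and is arranged in the final choice of parameters; for $\ell=0$ the term $f_0$ is the original perturbation and one treats that initial step separately or simply has $1\geq 3\delta$ when $\delta<1/3$ — this constraint is absorbed into the eventual linking of constants). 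For the term $Z^{(\ell)}\circ\Phi^1_{g_{\ell+1}}$, the leading piece is $Z^{(\ell)}$ itself (still in $\sy1$ and in normal form) and the corrections $\poi{Z^{(\ell)}}{g_{\ell+1}}$, etc., are of order $\geq 1+(m_\ell-\delta)-\delta=m_\ell+a-2\delta$, which is $\geq m_{\ell+1}$ after the same kind of bookkeeping. For $f_\ell^\res$ and $f_\ell^\nr$ composed with the flow, the leading terms are themselves in $\sy{m_\ell}$, their brackets with $g_{\ell+1}$ are of strictly higher order $\geq m_\ell+(m_\ell-\delta)-\delta\geq m_{\ell+1}$, and the Lie remainders are remainders of order $N$ by Lemma \ref{resto.Lie}. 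Finally $\cR^{(N)}_\ell\circ\Phi^1_{g_{\ell+1}}=\cR^{(N)}_\ell+\cR_{\cR^{(N)}_\ell}$ stays a remainder, using \eqref{restir} to control the correction.

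Collecting terms, I would set $Z^{(\ell+1)}:=Z^{(\ell)}+f_\ell^\res$, which is still in $\sy1$ (since $m_\ell\geq1$, $f_\ell^\res\in\sy{m_\ell}\subset\sy1$) and in normal form because a sum of normal form functions is in normal form; then $f_{\ell+1}$ is the collection of all the genuinely higher-order pieces produced above, which by construction lies in $\sy{m_{\ell+1}}$; and $\cR^{(N)}_{\ell+1}$ is the sum of all Lie remainders, the cutoff tail $(f_\ell)_K$, and the transformed $\cR^{(N)}_\ell$, each of which is a remainder in the sense of \eqref{resti}–\eqref{riresti}, and the class of remainders is closed under finite sums. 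This puts $H^{(\ell+1)}$ in the required form. The main obstacle — and the only place real care is needed — is the accounting of the exponents: one must verify that every term other than the explicitly displayed $H_0$, $Z^{(\ell+1)}$, $f_{\ell+1}$ pieces either lands in $\sy{m_{\ell+1}}$ or is a remainder of order $N$, and this forces the inequalities relating $a$, $\delta$, and $\ell$ (essentially $\delta<1/3$ together with $\ell\le N$) that will later be folded into the global linking of $\beta,\delta,b,M,N$; the analytic estimates themselves are routine given Lemmas \ref{restocutoff}, \ref{resto.Lie}, and \ref{cohomo}.
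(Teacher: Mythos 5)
Your overall strategy is the paper's: split $f_\ell$ via \eqref{dec f}, solve the cohomological equation for $g_{\ell+1}\in\sy{m_\ell-\delta}$, expand each term of $H^{(\ell)}\circ\Phi^1_{g_{\ell+1}}$ with Lemma \ref{resto.Lie}, set $Z^{(\ell+1)}=Z^{(\ell)}+f_\ell^{\res}$, and collect the cutoff tail and the Lie remainders into $\cR^{(N)}_{\ell+1}$. However, the order bookkeeping has a genuine gap at the one non-routine point, namely the term $H_0\circ\Phi^1_{g_{\ell+1}}$. The function $H_0(p)=\sum_j p_j^2/2$ is not in any class $\sy{m}$ (it is unbounded in $p$, and so is $\partial_p H_0$), so you may neither apply Lemma \ref{resto.Lie} with $f=H_0$ nor assign $(H_0)_{(l)}$ the order $1+l(m_\ell-\delta)-\delta$ by the generic Poisson--bracket rule: that exponent presupposes $H_0\in\sy1$. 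The paper's fix is to seed the Lie sequence with the cohomological equation itself: $H_{0,(1)}=\poi{H_0}{g_{\ell+1}}=-f_\ell^{\nr}\in\sy{m_\ell}$, whence $H_{0,(l)}\in\sy{m_\ell+(l-1)(m_\ell-2\delta)}$ for $l\geq 1$; in particular $H_{0,(2)}\in\sy{2(m_\ell-\delta)}$ and $2m_\ell-2\delta=1+a+2\ell a\geq 1+(\ell+1)a=m_{\ell+1}$, with no condition on $\delta$ whatsoever. Your exponent instead leads to the requirement $1+(\ell-1)a\geq 3\delta$, which fails outright at $\ell=0$ (where it reads $2\delta\geq 3\delta$) and for $\ell\geq1$ imposes $\delta\leq 1/3$; this cannot be ``absorbed into the eventual linking of constants'', since Corollary \ref{dimmain2} takes $\delta=\frac{1}{4}+\frac{b}{2}$, which exceeds $1/3$ as soon as $b>1/6$, so the spurious constraint would destroy the claimed range $b<1/2$.

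A second, smaller error would also break the induction if taken at face value: the order of $\poi{Z^{(\ell)}}{g_{\ell+1}}$ is $1+(m_\ell-\delta)-\delta=m_\ell+a=m_{\ell+1}$, i.e.\ exactly on the boundary; your stated value $m_\ell+a-2\delta$ is strictly smaller than $m_{\ell+1}$, so ``the same kind of bookkeeping'' does not close there either. The remaining steps of your plan --- the tail $(f_\ell)_K$ via (the obvious extension of) Lemma \ref{restocutoff}, the brackets $\poi{f_\ell}{g_{\ell+1}}\in\sy{2m_\ell-2\delta}\subseteq\sy{m_{\ell+1}}$, the composition $\cR^{(N)}_\ell\circ\Phi^1_{g_{\ell+1}}$ remaining a remainder, and the definitions of $Z^{(\ell+1)}$, $f_{\ell+1}$, $\cR^{(N)}_{\ell+1}$ --- coincide with the paper's proof and are correct.
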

\proof Decompose $f_{\ell}$ as in \eqref{dec f} and let $g_{\ell+1}$ be the
solution of the cohomological equation \eqref{cohomol} with
$f_{\ell}^{\nr}$ in place of $f^{\nr}$ and compute  
\begin{align}
  \label{I.1}
  H^{(\ell)}\circ\Phi_{g_{\ell+1}}^1&= H_0+\poi{H_0}{g_{\ell+1}}
  \\
\label{I.2}
  &+H_0\circ  \Phi_{g_{\ell+1}}^1-\left(H_0+\poi{H_0}{g_{\ell+1}}
\right)
\\
\label{I.3}
&+f_{\ell}^{\nr}+f_{\ell}^{\res}+f_{\ell,K}
\\
\label{I.4}
&+f_\ell\circ  \Phi_{g_{\ell+1}}^1-f_\ell
\\
\label{I.5}
&+Z^{(\ell)}\circ  \Phi_{g_{\ell+1}}^1-Z^{(\ell)}
\\
\label{I.6}
&+Z^{(\ell)}+\cR^{(N)}_\ell\circ\Phi^1_{g_{\ell+1}}\ . 
  \end{align}
Exploiting  Lemma \ref{resto.Lie}, one can decompose the
different lines as 
\begin{align*}
  \eqref{I.4}=\sum_{l=1}^{N}f_{\ell,(l)}+\widetilde\resto^{(N)}_1=:f^1_{\ell+1}+\widetilde\resto^{(N)}_1
  \\
\eqref{I.5}=\sum_{l=1}^{N}Z^{(\ell)}_{(l)}+\widetilde\resto^{(N)}_2=:f^2_{\ell+1}+\widetilde\resto^{(N)}_2  
\end{align*}
with $f^1_{\ell+1}\in\sy {2m_\ell-2\delta}$, $f^2_{\ell+1}\in\sy
{m_\ell+1-2\delta} $ and $\widetilde\cR^{(N)}_j$ remainders (actually
of order higher than $\ep ^{Na+1}$).

Concerning \eqref{I.2}, just remark that the sequence $H_{0,(l)}$
defining the Lie transform of $H_{0}$ (cf. \eqref{seqlie}), can be
generated computing $H_{0,(1)}$ from the cohomological equation,
giving
$H_{0,(1)}=\poi{H_0}{g_{\ell+1}}=-f_{\ell}^{\nr}\in\sy{m_\ell}$. In this
way one gets $H_{0,(l)}\in\sy{2(m_\ell-\delta)}$ and also 
$$
\eqref{I.2}=\sum_{l=2}^{N}H_{0,(l)}+\widetilde\resto^{(N)}_3=:   f^3_{\ell+1}+\widetilde\resto^{(N)}_3\ .  
$$
It follows that, defining
$f_{\ell+1}:=f^1_{\ell+1}+f^2_{\ell+1}+f^3_{\ell+1}$,
$$
\cR^{(N)}_{\ell+1}:=f_{\ell,K}+\widetilde\cR^{(N)}_1+\widetilde\cR^{(N)}_2+\widetilde\cR^{(N)}_3
+\cR^{(N)}_{\ell}\circ \Phi_{g_{\ell+1}}^1\ , \quad
Z^{(\ell+1)}:=Z^{(\ell)}+ f_{\ell}^{\res}\ 
$$
one has the thesis.
\qed

\section{Geometric part}

\subsection{Dynamics of a Normal Form Hamiltonian}

In this section we define a partition of the action space $\R^d$ into
blocks which are left invariant by the flow of a Hamiltonian which is
in normal 
form, namely
\begin{equation}\label{ham fnorm} H_Z(p, q) := H_0(p) + Z(p,
  q)\,,
\end{equation}
with $Z$ in normal form.  As usual this partition will be labeled by
the sub moduli of $\Z^d$ identifying the resonances present in each
region.

\begin{definition}
  \label{modulo}(Module and related notations.)
  A subgroup $M \subseteq \Z^d$ will be called a module if 
$\displaystyle{ \Z^d \cap \textrm{span}_\R M= M}$. Given a Module
  $M$, we will denote $M_{\R}$ the linear subspace of $\R^d$ generated
  by $M$. Furthermore, given a vector $p\in\R^d$ we will denote by
  $p_M$ its orthogonal projection on $M_{\R}$.
  \end{definition}

\begin{remark}
  \label{norm.flow}
 From the Definition \ref{normal form} of normal form it immediately
 follows that, if a point $p\in \R^d$ is such that
$$
|p \cdot k | \geq \ep^\delta \quad \forall k \in \Z^d\backslash \{0\} \textrm{ s.t. } |k| \leq K\,,
$$
then
\begin{equation}\label{non risuona}
\{ p,H_Z\} = 0\,,
\end{equation}
 hence, in this region the action $p$ is conserved along the motion of
 $H_Z$.
\end{remark}

 The first block we define is
\begin{equation}\label{def omega}
E_0 = \left \lbrace p \in \Z^d \left|\ |p \cdot k| \geq \ep^{\delta - 2 \beta} \quad \forall k \textrm{ s.t. } 0 < |k| \leq K \right.\right \rbrace\,,
\end{equation}
where the correction $2\beta$ to the exponent has been inserted in
order to separate $E_0$ from the regions where some resonances are
present.

In order to define the other blocks, we introduce the following
parameters:
\begin{gather} \label{def parametri}
\delta_1 = \delta - 2 \beta,\\
\delta_{s+1} = \delta_s - \beta s \quad \forall s = 1, \dots d-1\,,\\
C_1 = 1\,,\\
C_{s+1} = {3 s 2^s} C_s + 1 \quad \forall s = 1, \dots d-1\,.
\end{gather}
%	where ${\frak v}$ is defined by
%	$$
%	{\frak v} = \min \{ \textrm{Vol}\{u_1| \cdots |u_d\} \ |\ u_1, \dots u_d \textrm{ lin. indep. in } \Z^d \}\,,
%	$$
%	and $\textrm{Vol}\{ u_1| \cdots| u_d\}$ is the volume of the parallelepiped with edges $u_1, \dots u_d\,.$ Recall that ${\frak v}$ is a positive quantity, as pointed out in Lemma \ref{??}. 
The next definition we give is meant to identify the points $p$
which are in resonance with vectors of a given module $M \subseteq
\Z^d\,:$
\begin{definition}[Resonant zones]\label{def zone}
	For any module $M \subseteq \Z^d$ of dimension $s \geq 1$ and
        for any (ordered) set $\{ k_1, \dots, k_s\}$ of linearly
        independent vectors in $M$ such that $ |k_j| \leq K$ for all
        $j = 1, \dots,s,$ we define
	$$
	Z_{k_1, \dots, k_s} = \left \lbrace p \in \Z^d \left|\ |p\cdot k_j| < C_j \ep^{\delta_{j}} \quad \forall j = 1, \dots, d\right.\right \rbrace
	$$
	and
	$$
	Z^{(s)}_M = \bigcup_{\begin{subarray}{c}
		\{k_1, \dots, k_s\} \\ \textrm{lin. ind. in } M
		\end{subarray}} Z_{k_1, \dots, k_s} \,.
	$$
\end{definition}
Remark that the definition of $Z_{k_1,...,k_s}$ depends on the order
in which the vectors $k_j$ are chosen. Thus the definition of
resonant zone slightly differs from the analogous definition of
resonant zone given in \cite{GioPisa}. This is due to the fact that in
the present construction we are interested in exhibiting a
partition, and not only a covering, of the action space $\R^d.$ In
particular we have the following remark.
\begin{remark}\label{zone inscatolate}
	The resonant regions $Z^{(s)}_M$ are not reciprocally
        disjoint; on the contrary, given an arbitrary module $M$ of
        dimension $s\geq 2$, for any $s'<s$, the following inclusion
        holds
	$$
	Z^{(s)}_M \subseteq \bigcup_{M'\, :\,{\rm dim}M'=s' } Z^{(s^\prime)}_{M^\prime}\,.
	$$
\end{remark}
We now define the set composed by the points which are resonant with
the vectors in a module $M,$ but are non-resonant with the vectors
$k\not \in M$.

\begin{definition}[Resonant blocks]\label{def blocchi}
	Let $M $ be a module of dimension $s$, we define the resonant block
	$$
	B^{(s)}_M = Z^{(s)}_M \backslash \left( \bigcup_{\begin{subarray}{c} s^\prime > s \\ \dim M^\prime = s^\prime \end{subarray}} Z^{(s^\prime)}_{M^\prime}\right)\,.
	$$
\end{definition}
We prove below that the resonant blocks $\{B^{(s)}_M\}_{M, s}$ are
reciprocally disjoint; nevertheless, they are not suitable
for the geometric part, since they are not left invariant by the dynamics
associated to a normal form Hamiltonian. For such a reason, we need
the following definition:
\begin{definition}[Extended blocks and fast drift blocks] \label{def blocchi estesi}
For any module $M$  of dimension $s,$ we define
$$ \widetilde{E}^{(s)}_M = \left\{ B^{(s)}_M + M_{\R}\right\} \cap
Z^{(s)}_M
$$ and the \emph{extended blocks}
$$
	E^{(s)}_{M} =  \widetilde{E}^{(s)}_M \backslash \left( \bigcup_{\begin{subarray}{c}
		s^\prime < s \\ \dim M^\prime = s^\prime
		\end{subarray}} E^{(s^\prime)}_{M^\prime}\right)\,,
	$$
	where $A + B$ is the Minkowski sum between sets, namely $\displaystyle{A + B = \left\lbrace a + b\ |\  a \in A, b \in B \right \rbrace}\,.$
	Moreover, for all $p \in E^{(s)}_{M}$ we define the \emph{fast drift blocks}
	$$
	\Pi^{(s)}_{M}(p) = \left\lbrace p + M_{\R} \right \rbrace \cap
        Z^{(s)}_{M}\,\ .
	$$
\end{definition}
With the above definitions, the following result holds true:
\begin{theorem}\label{parte geom}
	\begin{enumerate}
	\item	The blocks $E_0 \cup \{E^{(s)}_M\}_{s, M}$ are a partition of $\R^d\,.$
	\item Each block is invariant for the dynamics of a system in
          normal form.
        \item Along such a dynamics, for any initial datum one has  
	\begin{equation}\label{per sempre}
	|p(t) - p(0)| \leq 3 d 2^{d-1} C_d \ep^{\delta - ((d -1)(d+1)+2)\beta} \,.
	\end{equation}
	\end{enumerate}
\end{theorem}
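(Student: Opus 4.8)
The plan is to reduce statements 1--3 to the geometry of the resonant zones, using repeatedly that, by Definition~\ref{normal form} and Remark~\ref{norm.flow}, a normal--form Hamiltonian $H_Z=H_0+Z$ couples at a point $p$ only the modes $k$ with $|k|\le K$ and $|p\cdot k|\le\ep^\delta$, so along its flow $\dot p$ always lies in the span of such modes. The combinatorial core is an \emph{overlap lemma}: if $M\neq M'$ are modules with $\dim M=\dim M'=s$, then $Z^{(s)}_M\cap Z^{(s)}_{M'}\subseteq\bigcup_{\dim M''=s+1}Z^{(s+1)}_{M''}$. Indeed a point $p$ in the intersection lies in some $Z_{k_1,\dots,k_s}$ with $k_j\in M$ and in some $Z_{k'_1,\dots,k'_s}$ with $k'_j\in M'$; since $M_\R\neq M'_\R$ there is $j_0$ with $k'_{j_0}\notin M_\R$, and then $k_1,\dots,k_s,k'_{j_0}$ are $s+1$ independent vectors of norm $\le K$ with $|p\cdot k_j|<C_j\ep^{\delta_j}$ and $|p\cdot k'_{j_0}|<C_s\ep^{\delta_s}\le C_{s+1}\ep^{\delta_{s+1}}$, using that $C_j\ep^{\delta_j}$ increases in $j$ (because $\delta_j$ decreases and $C_j$ increases), so $p\in Z^{(s+1)}_{M''}$ with $M''=\Z^d\cap\Span_\R\{k_1,\dots,k_s,k'_{j_0}\}$. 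This is essentially the only place the monotonicity of the parameters is used; from it the blocks $B^{(s)}_M$ are at once pairwise disjoint (for $s\neq s'$ by the definitions, for $s=s'$ because the intersection would sit inside a $Z^{(s+1)}$, which $B^{(s)}_M$ excludes).

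\textbf{The partition (statement 1).}
Coverage: if $p\notin E_0$ then $p$ lies in some $Z^{(1)}$; taking the largest $s^\ast$ with $p\in Z^{(s^\ast)}_{M^\ast}$ one gets $p\in B^{(s^\ast)}_{M^\ast}\subseteq\widetilde E^{(s^\ast)}_{M^\ast}$, and running the downward recursion of Definition~\ref{def blocchi estesi} places $p$ in some $E^{(s)}_M$. Disjointness: $E_0$ misses every $Z^{(s)}_M$ since $C_1=1$; extended blocks of different dimension are disjoint by the ``$\setminus\bigcup_{s'<s}$'' in their definition; and $E^{(s)}_M\cap E^{(s)}_{M'}=\emptyset$ for $M\neq M'$ by a short estimate: a common point $p=b+v=b'+v'$ with $b\in B^{(s)}_M$, $v\in M_\R$, $b'\in B^{(s)}_{M'}$, $v'\in M'_\R$ satisfies, for a $k'_{j_0}\in M'\setminus M_\R$ with $|p\cdot k'_{j_0}|<C_s\ep^{\delta_s}$, the bound $|b\cdot k'_{j_0}|\le|p\cdot k'_{j_0}|+|v||k'_{j_0}|\le(1+s2^{s+1})C_s\ep^{\delta_{s+1}}<C_{s+1}\ep^{\delta_{s+1}}$ (here $|v|\le\mathrm{diam}\,\Pi^{(s)}_M(b)\le 2sC_s\ep^{\delta_s}K^{s-1}$, $K\le 2\ep^{-\beta}$, and $\delta_s-s\beta=\delta_{s+1}$), which contradicts the fact that $b\in B^{(s)}_M$ forces $|b\cdot k|\ge C_{s+1}\ep^{\delta_{s+1}}$ for every $k\notin M$ with $|k|\le K$ (else $b$ would lie in some $Z^{(s+1)}$). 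The recursion $C_{s+1}=3s2^sC_s+1$ is tuned exactly so that these inequalities close.

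\textbf{Dynamics (statements 2--3).}
I would first record two structural facts: (i) $Z^{(s)}_M$ is the cylinder $\{p:p_M\in U_M\}$ over a connected set $U_M\subseteq M_\R$ containing $0$, with $\mathrm{diam}\,U_M\le 2sC_s\ep^{\delta_s}K^{s-1}$, so the fast drift blocks $\Pi^{(s)}_M(p)$ are translates of $U_M$; (ii) \emph{module localization}: if $|p-b|\lesssim C_{s+1}\ep^{\delta_s-(s-1)\beta}$ for some $b\in B^{(s)}_M$ then every mode active at $p$ lies in $M$, since $|b\cdot k|\ge C_{s+1}\ep^{\delta_{s+1}}$ for $k\notin M$, $|k|\le K$, while passing from $b$ to $p$ changes $p\cdot k$ by at most $\sim\ep^{\delta_{s+1}}$ (again using $\delta_s-s\beta=\delta_{s+1}$ and the size of $C_{s+1}$), keeping $|p\cdot k|>\ep^\delta$. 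Now fix $p(0)\in E^{(s)}_M$ and write $p(0)=b+v$, $b\in B^{(s)}_M$. A bootstrap shows the orbit never leaves a ball of radius $O(\mathrm{diam}\,U_M)$ about $b$: while inside, (ii) gives that all active modes are in $M$, hence $\{p,H_Z\}\in M_\R$ and $p(t)_{M^\perp}\equiv p(0)_{M^\perp}$; then energy conservation $|p(t)|^2=|p(0)|^2+O(\ep)$ (recall $Z=O(\ep)$) together with $|p(0)_M|\le\mathrm{diam}\,U_M$ confines $p(t)_M$, closing the bootstrap. Consequently $|p(t)-p(0)|=|p(t)_M-p(0)_M|\le 2\,\mathrm{diam}\,U_M\,(1+o(1))$; taking $s=d$, bounding $K^{d-1}\le 2^{d-1}\ep^{-(d-1)\beta}$ and using $\delta_d-(d-1)\beta=\delta-\big(1+\tfrac{d(d+1)}2\big)\beta\ge\delta-\big((d-1)(d+1)+2\big)\beta$, one obtains \eqref{per sempre} for $\ep$ small. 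For statement 2 one upgrades ``the orbit stays near $p(0)$'' to ``the orbit stays in $E^{(s)}_M$'': since $p(t)_{M^\perp}=b_{M^\perp}$ with $b\in B^{(s)}_M$ the orbit remains in $B^{(s)}_M+M_\R$, and it remains in $Z^{(s)}_M$ and out of the lower-dimensional extended blocks by exploiting the cylinder picture together with the displacement bound just obtained.

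\textbf{Main obstacle.}
The delicate step is exactly this last one inside statement 2: showing that the orbit cannot drift across $\partial Z^{(s)}_M$ into a neighbouring extended block, i.e.\ that $p(t)_M$ stays inside $U_M$. Energy conservation pins $|p(t)_M|$ to $|p(0)_M|$ only up to $O(\sqrt\ep)$, and $U_M$ is a union of thin, elongated boxes rather than a ball, so one must carefully match the size of the energy sphere to the geometry of $U_M$ and to the carving ``$\setminus\bigcup_{s'<s}E^{(s')}_{M'}$'' — this is precisely why the extended blocks, and not the bare resonant blocks, are used. Everything else (the overlap lemma, the disjointness estimates, the module localization, the energy bound) is mechanical once the cylinder picture and the parameter recursions $\delta_{s+1}=\delta_s-\beta s$, $C_{s+1}=3s2^sC_s+1$ are in place.
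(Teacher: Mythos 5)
Your treatment of Item 1 is sound and close to the paper's: your ``overlap lemma'' ($Z^{(s)}_M\cap Z^{(s)}_{M'}$ sits inside higher--dimensional zones, using the monotonicity of $C_j\ep^{\delta_j}$) together with the quantitative disjointness of the \emph{extended} blocks of equal dimension is exactly the content of Lemmas \ref{lemma vicini} and \ref{lemma lontani}, and your coverage argument matches the paper's. The observation that $\dot I=0$ for $I=p\cdot\lambda$, $\lambda\perp M$, because the active modes near the fast drift block all lie in $M$, is also the paper's key dynamical input, and the diameter bound via Lemma \ref{giorgilli} gives \eqref{per sempre} once confinement is established.

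The gap is in Items 2--3, and it is the one you flag yourself as the ``main obstacle'': the energy--conservation bootstrap controls only the norm $|p(t)_M|$, while the cross--section of $Z^{(s)}_M$ in $M_\R$ is a union of thin parallelepipeds, not a ball, so nothing in your argument prevents the orbit from sliding along $M_\R$ out of $Z^{(s)}_M$ while keeping $|p(t)_M|$ essentially constant; once it crosses $\partial Z^{(s)}_M$ the module localization (ii) and hence the conservation of $p_{M^\perp}$ are lost, and neither invariance nor the displacement bound follows. (Even ignoring this, the constants do not close: with $C_{s+1}=3s2^sC_s+1$ the module--localization radius exceeds ${\rm diam}\,\Pi^{(s)}_M(p)$ only by a factor close to $3/2$, leaving no room for the $\sqrt{|p(0)_M|^2+O(\ep)}$ enlargement.) The missing idea — and the one the paper uses in place of energy conservation, which never appears in its proof — is an \emph{induction on the dimension $s$ of the module} combined with a first--exit--time argument: since the motion is confined to the affine plane $p+M_\R$, a first exit from $\Pi^{(s)}_M(p)$ must occur at a point of $\partial Z^{(s)}_M$; by the partition property that point lies in some $E^{(s')}_{M'}$; $s'=s$ with $M'\neq M$ and $s'>s$ are excluded by Lemma \ref{lemma lontani}, and $s'<s$ is excluded because those blocks are \emph{already known to be invariant} by the inductive hypothesis, so no orbit can enter them. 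This case analysis is what rules out the drift across $\partial Z^{(s)}_M$; without it (or an equivalent mechanism) your proof of Items 2 and 3 does not close.
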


\begin{corollary}
  \label{dimmain2} Theorem \ref{main2} holds.
  \end{corollary}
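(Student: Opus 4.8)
The plan is to obtain Theorem \ref{main2}, and hence the Corollary, simply by combining the Normal Form Lemma \ref{nor.for} with the geometric Theorem \ref{parte geom}, once the still‑free parameters $\delta$, $\beta$, $N$ have been tuned so that all the exponents land where they should.

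First I would apply Lemma \ref{nor.for} to the system $H=H_0+\ep V$ of \eqref{H1}; the hypothesis is met since $\ep V\in\sy1$. This produces a canonical transformation $\cT$ with $H\circ\cT=H_0+\sum_{j=1}^N Z_j+\cR^{(N)}$, each $Z_j$ in normal form and $\cR^{(N)}$ a remainder of order $N$. I then set
$$
H_Z:=H_0+\sum_{j=1}^N Z_j\,,
$$
and observe that $H_Z$ is in normal form in the sense of Definition \ref{normal form}: a finite sum of functions whose Fourier modes all satisfy $|k|\le K$ and are supported in $\{|p\cdot k|\le\ep^\delta\}$ is again of this kind. With this choice $H\circ\cT-H_Z=\cR^{(N)}$.

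Now I fix the parameters. Recalling $a=1-2\delta$, I choose first $\delta\in(b,\tfrac12)$, which is possible because $b<\tfrac12$ and which guarantees both $a>0$ and $1-\delta>b$; then $\beta\in(0,1)$ small enough that the exponents $\delta_s$ of \eqref{def parametri} remain positive and that $\delta-((d-1)(d+1)+2)\beta\ge b$; finally $N$ so large that $1+Na-2\delta\ge M$. With these choices the three items of Theorem \ref{main2} follow. For item 1, writing $(p,q)=\cT(\tilde p,\tilde q)$ gives $p_j-\tilde p_j=\cR_{p_j}(\ep,\tilde p,\tilde q)$, whence by \eqref{restip} $|p-\tilde p|\le\sqrt d\,C\ep^{1-\delta}\le C_M\ep^b$. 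For item 2, the remainder estimate \eqref{resti} gives $|\partial_p^\alpha\partial_q^\beta\cR^{(N)}|\le C\ep^{\,1+Na-|\alpha|\delta}$, which for $|\alpha|+|\beta|\le 2$ is at most $C\ep^{\,1+Na-2\delta}\le C\ep^M$, i.e. $\norma{H\circ\cT-H_Z}_{C^2}\le C_M\ep^M$. For item 3, since $H_Z$ is in normal form Theorem \ref{parte geom} applies: $\R^d$ splits into blocks invariant under the flow of $H_Z$, the initial action lies in exactly one of them, and part 3 of that theorem, cf.\ \eqref{per sempre}, gives $|\tilde p(t)-\tilde p(0)|\le 3d2^{d-1}C_d\,\ep^{\,\delta-((d-1)(d+1)+2)\beta}\le C_M\ep^b$ for all $t$, by the choice of $\beta$. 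Taking $\ep_M$ small enough for the finitely many ``$\ep$ small enough'' conditions above and $C_M$ the largest constant involved concludes the proof.

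There is no genuinely hard step: the Corollary is an assembly of two results already in hand. The one point to be watched is the joint choice of $\delta$, $\beta$ and $N$ making the drift exponent $\ge b$, the remainder exponent $\ge M$, and $a=1-2\delta>0$ all at once; and the fact that item 2 needs the estimate on $\cR^{(N)}$ in the $C^2$ norm, which is legitimate because the argument yielding \eqref{resti} controls second derivatives just as well.
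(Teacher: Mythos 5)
Your proposal is correct and is essentially the paper's own proof: the paper merely exhibits the explicit choices $\delta=\frac{1}{4}+\frac{b}{2}$, $\beta=\left(\frac{1}{2}-b\right)\frac{1}{2(d^2+1)}$, $N=\left[\frac{M-1}{a}\right]+1$ (which make the drift exponent $\delta-((d-1)(d+1)+2)\beta$ equal exactly $b$), while you verify existentially that compatible parameters exist and spell out how the three items of Theorem \ref{main2} follow from Lemma \ref{nor.for} and Theorem \ref{parte geom}. The one caveat you rightly flag --- that item 2 uses \eqref{resti} for $|\alpha|+|\beta|\le 2$ although Lemma \ref{nor.for} states it only for $|\alpha|+|\beta|\le 1$ --- is an imprecision already present in the paper itself, not a defect introduced by your argument.
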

\proof Choosing 
{$\delta=\frac{1}{4}+\frac{b}{2}$,}
%$\delta=\frac{1}{4}-\frac{b}{2}$
$\beta=\left(\frac{1}{2}-b\right)\frac{1}{2(d^2+1)}$ and
$N=\left[\frac{M-1}{a}\right]+1$ one immediately gets the result. \qed

The proof of Theorem \ref{parte geom} will occupy the rest of this
subsection.  We start by stating a few geometric results.

\begin{lemma}
  \label{giorgilli}(Lemma 5.7 of \cite{GioPisa})
Let $1\leq s\leq d$, and let $k_1,...,k_s$ be linearly independent
vectors in $\R^d$ satisfying $|k_j|\leq K$ for some positive $K$ and
for $1\leq j\leq s$.  Denote by ${\rm Vol}(k_1,...,k_s)$ the
$s$-dimensional volume of the parallelepiped with sides $k_1,...,k_s$;
let moreover $w\in{\rm Span}(k_1,...,k_s)$ be any vector, and let
$$\alpha:=\max_j \left|w\cdot k_j\right|\ ,
$$
then one has
\begin{equation}
  \label{stigio}
\norma w\leq\frac{sK^{s-1}\alpha}{{\rm Vol}(k_1,...,k_s)}\ .
  \end{equation}
  \end{lemma}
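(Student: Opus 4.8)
The plan is to reduce the estimate to a standard ``base times height'' computation for the parallelepiped spanned by $k_1,\dots,k_s$. Since $w\in\mathrm{Span}(k_1,\dots,k_s)$ and the $k_j$ are linearly independent, I would first write $w=\sum_{i=1}^s c_i k_i$ with uniquely determined coefficients $c_i$; the heart of the matter is then to show that $|c_i|\le \norma w\, K^{s-1}/\mathrm{Vol}(k_1,\dots,k_s)$ for each $i$. To this end, for a fixed index $i$ I would introduce the orthogonal decomposition $k_i=k_i^{\parallel}+k_i^{\perp}$, where $k_i^{\parallel}$ is the orthogonal projection of $k_i$ onto $\mathrm{Span}(k_j:j\neq i)$ and $k_i^{\perp}$ is the component orthogonal to that subspace; note $k_i^{\perp}\neq 0$ precisely because $k_1,\dots,k_s$ are linearly independent.

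The key algebraic observation is that taking the scalar product of $w=\sum_j c_j k_j$ with $k_i^{\perp}$ annihilates every term except the $i$-th: indeed $k_j\cdot k_i^{\perp}=0$ for $j\neq i$, while $k_i\cdot k_i^{\perp}=\norma{k_i^{\perp}}^2$, so $w\cdot k_i^{\perp}=c_i\norma{k_i^{\perp}}^2$, that is $c_i=(w\cdot k_i^{\perp})/\norma{k_i^{\perp}}^2$. Cauchy--Schwarz then gives $|c_i|\le \norma w/\norma{k_i^{\perp}}$. Next I would invoke the elementary volume identity $\mathrm{Vol}(k_1,\dots,k_s)=\norma{k_i^{\perp}}\,\mathrm{Vol}(k_1,\dots,\widehat{k_i},\dots,k_s)$ (the $s$-volume equals the $(s-1)$-volume of the base spanned by the $k_j$, $j\neq i$, times the height $\norma{k_i^{\perp}}$), together with Hadamard's inequality $\mathrm{Vol}(k_1,\dots,\widehat{k_i},\dots,k_s)\le\prod_{j\neq i}\norma{k_j}\le K^{s-1}$. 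Combining these yields $\norma{k_i^{\perp}}\ge \mathrm{Vol}(k_1,\dots,k_s)/K^{s-1}$ and hence the claimed bound $|c_i|\le \norma w\,K^{s-1}/\mathrm{Vol}(k_1,\dots,k_s)$.

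Finally I would close the argument by expanding $\norma w^2=w\cdot w=\sum_{i=1}^s c_i\,(w\cdot k_i)$, bounding $|w\cdot k_i|\le\alpha$ by hypothesis, to obtain $\norma w^2\le\sum_{i=1}^s|c_i|\,\alpha\le s\,\norma w\,K^{s-1}\alpha/\mathrm{Vol}(k_1,\dots,k_s)$; the case $w=0$ being trivial, dividing by $\norma w$ gives \eqref{stigio}. I do not expect any serious obstacle: the only points requiring care are the correct choice of the auxiliary vectors $k_i^{\perp}$ (the ``heights'' of the parallelepiped) and the precise form of the base--height volume identity, after which everything is Cauchy--Schwarz and Hadamard. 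An alternative route would apply Cramer's rule to the Gram matrix of the $k_j$ and estimate the resulting minors via the Cauchy--Binet formula, but the projection argument above reaches the sharp constant $sK^{s-1}$ with less machinery, and it also covers the case $s=d$ (where $\mathrm{Span}(k_1,\dots,k_s)=\R^d$ and $w$ is arbitrary) with no change.
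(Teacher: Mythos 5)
Your proof is correct, and every step checks out: $k_i^{\perp}\neq 0$ by linear independence, the identity $w\cdot k_i^{\perp}=c_i\Vert k_i^{\perp}\Vert^2$, the base-times-height factorization $\mathrm{Vol}(k_1,\dots,k_s)=\Vert k_i^{\perp}\Vert\,\mathrm{Vol}(k_1,\dots,\widehat{k_i},\dots,k_s)$ combined with Hadamard's inequality to get $\Vert k_i^{\perp}\Vert\geq \mathrm{Vol}(k_1,\dots,k_s)/K^{s-1}$, and the closing estimate $\Vert w\Vert^2=\sum_i c_i\,(w\cdot k_i)\leq \alpha\sum_i |c_i|$, with the trivial case $w=0$ set aside. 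Be aware, however, that the paper does not prove this lemma at all: it is quoted as Lemma 5.7 of Giorgilli's notes and the proof is explicitly deferred to \cite{GioPisa}, so there is no in-paper argument to compare with. Your dual-basis/projection argument is essentially the classical one behind this geometric lemma (isolate each coefficient by pairing with the component of $k_i$ orthogonal to the span of the remaining vectors, then bound that ``height'' from below by volume considerations), so what you have written can serve as a clean, self-contained substitute for the citation, reaching the stated constant $sK^{s-1}/\mathrm{Vol}(k_1,\dots,k_s)$ without Gram determinants or Cauchy--Binet.
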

For the proof see \cite{GioPisa}.

\begin{lemma} \label{lemma vicini}(Extended blocks are close to
    blocks.) For all $p \in \widetilde{E}^{(s)}_M$ there exists a
  point $p^\prime \in B^{(s)}_M$ such that
	\begin{equation} \label{eq vicini}
	| p - p^\prime| \leq {2 s} C_s K^{s-1} \ep^{\delta_s}\,.
	\end{equation}
\end{lemma}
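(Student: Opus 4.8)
The plan is to prove that the point $p':=p_0$ itself does the job, where $p_0\in B^{(s)}_M$ is a witness to the fact that $p$ lies in the Minkowski sum $B^{(s)}_M+M_\R$. Indeed, by Definition \ref{def blocchi estesi}, any $p\in\widetilde E^{(s)}_M$ can be written $p=p_0+v$ with $p_0\in B^{(s)}_M$ and $v\in M_\R$, and moreover $p\in Z^{(s)}_M$. Since the orthogonal projection onto $M_\R$ is linear and leaves $v$ fixed, we get $p-p_0=v=p_M-(p_0)_M$, hence $|p-p'|=|p-p_0|\le |p_M|+|(p_0)_M|$. It therefore suffices to bound each resonant component $|p_M|$ and $|(p_0)_M|$ by $sC_sK^{s-1}\ep^{\delta_s}$.

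To estimate $|p_M|$: membership $p\in Z^{(s)}_M$ supplies an ordered linearly independent $s$-tuple $k_1,\dots,k_s\in M$ with $|k_j|\le K$ and $|p\cdot k_j|<C_j\ep^{\delta_j}$ for all $j$. The sequences of \eqref{def parametri} are monotone ($C_j$ increasing, $\delta_j$ decreasing), so for $\ep<1$ each term is bounded by $C_s\ep^{\delta_s}$. Since $\dim M=s$, the vectors $k_1,\dots,k_s$ form an $\R$-basis of $M_\R$, so $p_M\in\Span(k_1,\dots,k_s)$ and $p_M\cdot k_j=p\cdot k_j$. I then apply Lemma \ref{giorgilli} with $w=p_M$, $\alpha=\max_j|p_M\cdot k_j|<C_s\ep^{\delta_s}$, and ${\rm Vol}(k_1,\dots,k_s)\ge 1$ (linearly independent integer vectors have Gram determinant a positive integer), obtaining $|p_M|<sK^{s-1}C_s\ep^{\delta_s}$. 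The same argument applied to $p_0\in B^{(s)}_M\subseteq Z^{(s)}_M$ yields $|(p_0)_M|<sK^{s-1}C_s\ep^{\delta_s}$. Summing the two bounds gives $|p-p'|\le 2sC_sK^{s-1}\ep^{\delta_s}$, which is the claim.

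There is no genuine obstacle here; the only points requiring care are: (i) the reduction $|p-p_0|=|p_M-(p_0)_M|$, which relies on the defect vector $v$ lying in $M_\R$ and hence being unaffected by projection onto $M_\R$; (ii) the fact that both $p$ and its witness $p_0$ belong to $Z^{(s)}_M$, so that the Giorgilli volume estimate can be invoked separately for each; and (iii) recording the elementary facts $C_j\ep^{\delta_j}\le C_s\ep^{\delta_s}$ for $j\le s$ and ${\rm Vol}(k_1,\dots,k_s)\ge1$ for linearly independent vectors in $\Z^d$.
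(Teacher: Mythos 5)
Your proof is correct and follows essentially the same route as the paper: take the witness $p'\in B^{(s)}_M$ from the Minkowski-sum definition of $\widetilde E^{(s)}_M$, note $p-p'=p_M-p'_M$, and bound each projection by $sC_sK^{s-1}\ep^{\delta_s}$ via Lemma \ref{giorgilli} with ${\rm Vol}(k_1,\dots,k_s)\ge 1$. The only difference is that you make explicit the monotonicity step $C_j\ep^{\delta_j}\le C_s\ep^{\delta_s}$, which the paper leaves implicit.
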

\begin{proof}
By the very definition of $\widetilde{E}^{(s)}_M,$ if $p \in
\widetilde{E}^{(s)}_M$ then there exists a point $p^\prime \in
B^{(s)}_M$ such that $\displaystyle{p - p^\prime \in M\,.}$ In
particular one has
that
	\begin{equation} \label{diff}
	p - p^\prime = p_M - p^\prime_M\,.
	\end{equation}
	Moreover, since $p \in \widetilde{E}^{(s)}_M \subseteq
        Z^{(s)}_M,$ there exist $k_1, \dots, k_s$ linearly independent
        vectors in $M$, with $|k_j| \leq K $, 
        such that
	$$
	|p_M \cdot k_j| = |p \cdot k_j| \leq C_j \ep^{\delta_j}\,,
   \quad \forall j = 1, \dots, s\,.
	$$ Hence, remarking that for $s$ independent vectors with
   integer components ${\rm Vol}(k_1,...,k_s)\geq1$, Lemma
   \ref{giorgilli} implies
	$$
	|p_M| \leq {s K^{s-1} C_s \ep^{\delta_s}}\,.
	$$
	Analogously, since $p^\prime \in B^{(s)}_M \subseteq Z^{(s)}_M\,,$ 
	$$
	|p^\prime_M| \leq  {s K^{s-1} C_s \ep^{\delta_s}}\,.
	$$
Thus \eqref{diff} gives 
	$$
	|p-p^\prime| \leq |p_M| + |p^\prime_M| \leq 2 {s K^{s-1} C_s \ep^{\delta_s}}\,.
	$$
\end{proof}
Lemma \ref{lemma vicini} enables us to deduce the following result.

\begin{lemma}\label{lemma lontani}(Non overlapping of resonances)
Consider two arbitrary resonance moduli $M$ and $M^\prime$
respectively of dimensions $s$ and $s^\prime.$ If $s^\prime \leq s$
and $M^\prime \nsubseteq M\,,$ then for all $p \in E^{(s)}_M$ one has
that 	\begin{equation} \label{dist}
	\textrm{dist} \left({\Pi^{(s)}_M}(p),\  Z^{(s^\prime)}_{M^\prime}\right) > {s C_s } K^{s-1} \ep^{\delta_s}\,,
	\end{equation}
	%		$$
	%		\overline{\Pi^{(s)}_M}(p) \cap Z^{(s^\prime)}_{M^\prime} = \emptyset\,,
	%		$$
	where $\displaystyle{\textrm{dist}(A, B) = \inf \{|a-b|\ |\ a \in A,\ b \in B \}}$ denotes the distance between two sets.
\end{lemma}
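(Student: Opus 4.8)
The plan is to argue by contradiction. Suppose \eqref{dist} fails, i.e.\ $\dist(\Pi^{(s)}_M(p),Z^{(s')}_{M'})\le sC_sK^{s-1}\ep^{\delta_s}$; then there are points $q\in\Pi^{(s)}_M(p)$ and $q'\in Z^{(s')}_{M'}$ with $|q-q'|\le sC_sK^{s-1}\ep^{\delta_s}$ (the infimum defining $\dist$ need not be attained, but since all the inequalities defining the resonant zones are strict, an arbitrarily small slack is harmless, so I take $q,q'$ nearly optimal). Since $p\in E^{(s)}_M\subseteq\widetilde{E}^{(s)}_M=(B^{(s)}_M+M_\R)\cap Z^{(s)}_M$, I would write $p=p''+v$ with $p''\in B^{(s)}_M$ and $v\in M_\R$; then $p''\in\{p+M_\R\}\cap Z^{(s)}_M=\Pi^{(s)}_M(p)$ as well.

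Next I would show that $p''$ is close to $q'$. Both $q$ and $p''$ lie in $\Pi^{(s)}_M(p)\subseteq Z^{(s)}_M$, and $q-p''\in M_\R$, so $q-p''=q_M-p''_M$. Applying Lemma~\ref{giorgilli} to the $M_\R$-projections exactly as in the proof of Lemma~\ref{lemma vicini} (using that $s$ linearly independent integer vectors span a parallelepiped of volume $\ge1$) gives $|q_M|,|p''_M|<sC_sK^{s-1}\ep^{\delta_s}$, hence $|q-p''|<2sC_sK^{s-1}\ep^{\delta_s}$ and therefore $|p''-q'|<3sC_sK^{s-1}\ep^{\delta_s}$.

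The core step is then to produce a resonance of dimension $>s$ satisfied by $p''$. Because $M'\nsubseteq M$ and $M,M'$ are modules, $M'_\R\nsubseteq M_\R$, so $r:=\dim(M_\R+M'_\R)\ge s+1$. From $p''\in Z^{(s)}_M$ I get linearly independent $\ell_1,\dots,\ell_s\in M$, $|\ell_j|\le K$, spanning $M_\R$, with $|p''\cdot\ell_j|<C_j\ep^{\delta_j}$; from $q'\in Z^{(s')}_{M'}$ I get linearly independent $h_1,\dots,h_{s'}\in M'$, $|h_j|\le K$, spanning $M'_\R$, with $|q'\cdot h_j|<C_j\ep^{\delta_j}$. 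I complete $\ell_1,\dots,\ell_s$ to a basis of $M_\R+M'_\R$ by adjoining $r-s\ge1$ of the $h_j$'s; all $r$ of these vectors lie in the module $\widetilde M:=\Z^d\cap(M_\R+M'_\R)$, which has dimension $r$. For each adjoined $h_j$, using $s'\le s$ and the monotonicity of $C_\bullet$ and of $\ep^{\delta_\bullet}$,
$$|p''\cdot h_j|\le|q'\cdot h_j|+|p''-q'|\,|h_j|<C_s\ep^{\delta_s}+3sC_sK^s\ep^{\delta_s}=(1+3sK^s)C_s\ep^{\delta_s};$$
since $K\le 2\ep^{-\beta}$ for $\ep\le1$, the recursive choices $\delta_{s+1}=\delta_s-\beta s$ and $C_{s+1}=3s2^sC_s+1$ in \eqref{def parametri} give, for $\ep$ small (depending only on $d$), the bound $(1+3sK^s)C_s\ep^{\delta_s}\le C_{s+1}\ep^{\delta_{s+1}}\le C_{s+m}\ep^{\delta_{s+m}}$ at each slot $s+m$, $1\le m\le r-s$. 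Hence $p''$ satisfies all the inequalities defining $Z_{\ell_1,\dots,\ell_s,h_{j_1},\dots,h_{j_{r-s}}}$, so $p''\in Z^{(r)}_{\widetilde M}$ with $r>s$; but then $p''\notin B^{(s)}_M$, contradicting $p''\in B^{(s)}_M$.

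I expect the main obstacle to be the bookkeeping in the last paragraph: one must keep the adjoined vectors in the correct slots of the ordered tuple defining $Z_{k_1,\dots,k_r}$, and verify that the three accumulated errors — the $\dist$-gap, the diameter of the fast-drift fibre $\Pi^{(s)}_M(p)$, and the factor $|h_j|\le K$ — are all reabsorbed by the gain $\ep^{-\beta s}$ hidden in $\delta_{s+1}$ together with the jump from $C_s$ to $C_{s+1}$. This is exactly what the recursion \eqref{def parametri} is tailored for; everything else is the elementary geometry already packaged in Lemmas~\ref{giorgilli} and~\ref{lemma vicini}.
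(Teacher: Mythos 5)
Your proof is correct and follows essentially the same route as the paper: arguing by contradiction, you pass to a point $p''\in B^{(s)}_M$ within $3sC_sK^{s-1}\ep^{\delta_s}$ of $Z^{(s^\prime)}_{M^\prime}$ (the paper obtains this by citing Lemma \ref{lemma vicini}, you re-derive it inline from Lemma \ref{giorgilli}), then use a resonance vector of $M^\prime$ not lying in $M$ together with the recursion \eqref{def parametri} to place $p''$ in a resonant zone of dimension $>s$, contradicting $p''\in B^{(s)}_M$. The only cosmetic difference is that the paper adjoins a single such vector $h$ (landing in $Z^{(s+1)}_{M_h}$), whereas you complete $\ell_1,\dots,\ell_s$ to a full basis of $M_\R+M^\prime_\R$; one vector suffices.
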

\begin{proof} By contradiction, assume that \eqref{dist} is not
  true. Then, by Lemma \ref{lemma vicini}, one also has 
  $$
	\textrm{dist} \left(B^{(s)}_M,\  Z^{(s^\prime)}_{M^\prime}\right) \leq {3s C_s }K^{s-1} \ep^{\delta_s}\,,
  $$
It follows that 
  there exist  $p'' \in {B^{(s)}_M}(p)$ and $p^\prime \in Z^{(s^\prime)}_{M^\prime}$ such that
	$$ |{p}'' - p^\prime| \leq 3{ s C_s} K^{s-1}
        \ep^{\delta_s}\,. $$  
	Since ${p}^\prime \in Z^{(s^\prime)}_{M^\prime}$ and $M^\prime \nsubseteq M\,,$ there exists $h\notin M$ such that
	$|h| \leq K$ and $\displaystyle{|p^\prime \cdot h| \leq C_s
          \ep^{\delta_s}\,.}$ Compute now
	\begin{align*}
	|{p}'' \cdot h| & \leq |{p}''-p^\prime| |h| + |{p}^\prime \cdot h| \\
	& \leq 3 {s K^{s-1} C_s} \ep^{\delta_s} K + C_s \ep^{\delta_s}\,,
	\end{align*}
	thus, recalling that $K = [ \ep^{- \beta} ] + 1\,,$ one has that
	$$
	|{p}''\cdot h| < \left({3 s 2^s } + 1\right) C_s \ep^{\delta_s - \beta s}\,.
	$$
	Due to definitions \eqref{def parametri}, it follows that
	$$
	|{p}'' \cdot h | < C_{s+1} \ep^{\delta_{s+1}}\,.
	$$
	Hence ${p}'' \in Z^{(s+1)}_{M_h }\,,$ where $M_h$ is the
        resonance module generated by $M \cup \{h\},$ which is
        impossible, since ${p}'' \in B^{(s)}_M\,$ implies that it is
        not in any higher dimensional resonance zones. 
\end{proof}
\begin{lemma} \label{lemma diam}
	Fix an arbitrary module $M$ of dimension $s$, for all $p \in
        E^{(s)}_M,$
	$$
	\textrm{diam}\ \left( \Pi^{(s)}_M(p) \right) \leq {2 s K^{s-1} C_{s}} \ep^{\delta_{s}}\,.
	$$
\end{lemma}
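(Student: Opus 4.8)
The plan is to bound, uniformly in $q$, the size of the $M_\R$-component $q_M$ of any point $q$ lying in the resonant zone $Z^{(s)}_M$, and then to exploit the fact that any two points of $\Pi^{(s)}_M(p)$ differ by a vector of $M_\R$.

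First I would record the following projection estimate, which is essentially already contained in the proof of Lemma \ref{lemma vicini}: \emph{if $q\in Z^{(s)}_M$, then $|q_M|\leq sK^{s-1}C_s\ep^{\delta_s}$}. Indeed, $q\in Z^{(s)}_M$ means there are linearly independent vectors $k_1,\dots,k_s\in M$ with $|k_j|\leq K$ and $|q\cdot k_j|<C_j\ep^{\delta_j}$ for $j=1,\dots,s$. Since the $C_j$ are increasing and the $\delta_j$ decreasing by \eqref{def parametri}, for $\ep$ small one has $C_j\ep^{\delta_j}\leq C_s\ep^{\delta_s}$, hence $\max_j|q\cdot k_j|\leq C_s\ep^{\delta_s}$. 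As $k_j\in M_\R$ we have $q\cdot k_j=q_M\cdot k_j$, and $q_M\in M_\R=\Span_\R(k_1,\dots,k_s)$; moreover $\mathrm{Vol}(k_1,\dots,k_s)\geq 1$ because the $k_j$ are linearly independent integer vectors. Applying Lemma \ref{giorgilli} with $w=q_M$ then gives $|q_M|\leq sK^{s-1}C_s\ep^{\delta_s}$.

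Then I would take two arbitrary points $q,q'\in\Pi^{(s)}_M(p)=\{p+M_\R\}\cap Z^{(s)}_M$. By construction $q-q'\in M_\R$, so $q-q'=(q-q')_M=q_M-q'_M$. Applying the projection estimate of the previous step to both $q$ and $q'$ (both of which lie in $Z^{(s)}_M$) and using the triangle inequality yields $|q-q'|=|q_M-q'_M|\leq|q_M|+|q'_M|\leq 2sK^{s-1}C_s\ep^{\delta_s}$. Taking the supremum over $q,q'\in\Pi^{(s)}_M(p)$ gives $\textrm{diam}\,(\Pi^{(s)}_M(p))\leq 2sK^{s-1}C_s\ep^{\delta_s}$, which is the claim.

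There is no genuine obstacle here: the lemma is an immediate consequence of Lemma \ref{giorgilli} together with the projection bound already used inside the proof of Lemma \ref{lemma vicini}. The only minor points to keep track of are the monotonicity of the constants $C_j$ and exponents $\delta_j$ (so that all the constraints defining $Z_{k_1,\dots,k_s}$ can be absorbed into the single bound $C_s\ep^{\delta_s}$), the identity $q\cdot k_j=q_M\cdot k_j$ valid for $k_j\in M_\R$, and the normalization $\mathrm{Vol}(k_1,\dots,k_s)\geq 1$ for linearly independent integer vectors.
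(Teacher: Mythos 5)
Your proposal is correct and follows essentially the same route as the paper: bound $|q_M|$ for any $q\in Z^{(s)}_M$ via Lemma \ref{giorgilli} (exactly as in the proof of Lemma \ref{lemma vicini}), then use that two points of $\Pi^{(s)}_M(p)$ differ by a vector of $M_\R$ and apply the triangle inequality. Your explicit remark that the constraints $C_j\ep^{\delta_j}$ are absorbed into the single bound $C_s\ep^{\delta_s}$ by monotonicity is a detail the paper leaves implicit, but the argument is the same.
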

\begin{proof}
	Arguing as in the proof of Lemma \ref{lemma vicini}, if $p^\prime$ and $p^{\prime \prime}$ are two points in $\Pi^{(s)}_M(p),$ then 
	$$
	|p^\prime_M|,\ |p^{\prime \prime}_M| \leq { s C_s K^{s-1}} \ep^{\delta_s}\,.
	$$
	Hence, recalling that $p^\prime- p^{\prime \prime} \in M,$ we deduce that
	$$
	|p^\prime - p^{\prime \prime}| \leq {2 s C_s K^{s-1}} \ep^{\delta_s}
	$$
\end{proof}
\begin{remark}\label{mini}
	Recall that $K = [\ep^{-\beta}] + 1;$ then Lemma \ref{lemma diam} implies that for any module $M$ and for all $p \in E^{(s)}_M$
	$$
	\textrm{diam} \left(\Pi^{(s)}_M(p)\right) \leq 3 d 2^{d-1} C_d \ep^{\delta - \left((d-1)(d+1) + 2\right)\beta}\,.
	$$
\end{remark}
We are now in position to prove Theorem \ref{parte geom}. Remark that its proof has also as a consequence the fact that, for any resonance modulus $M,$
$$
\partial E^{(s)}_M \subseteq \left(\bigcup_{\begin{subarray}{c} s^\prime < s \\ M^\prime \subset M \end{subarray}} \partial E^{(s^\prime)}_{M^\prime}  \right) \cup \overline{Z^{(s)}_M}\,,
$$
which shows that it is possible to move from the extended block $E^{(s)}_M$ only losing resonances (that is, entering a block $E^{(s^\prime)}_{M^\prime}$ with $M^\prime \subset M$), or  remaining inside the same resonant zone $Z^{(s)}_M\,.$ The latter option will be excluded by the dynamics, which ensures that the motion entirely evolves along planes parallel to $M\,.$
\begin{proof}[Proof of Theorem \ref{parte geom}]
	Since each point $p \in \R^d$ belongs either to $E_0$ or to
        $Z^{(s)}_M$ for some $M$ and $s,$ it immediately follows from
        the definition of the extended blocks that $E_0 \cup
        \{E^{(s)}_{M}\}_{M, s}$ is a covering of $\R^d\,.$ If
        $E^{(s)}_M$ and $E^{(s^{\prime})}_{M^\prime}$ are such that $s
        \neq s^\prime,$ then the two blocks are disjoint by their very
        definition; if $s = s^\prime$ and $M' \neq M,$ then their
        intersection is empty by Lemma \ref{lemma lontani}, recalling
        that for all $M^\prime$ and $s^\prime$ one has
        $E^{(s^{\prime})}_{M^\prime }\subseteq
        Z^{(s^{\prime})}_{M^\prime}.$ This proves Item 1.
        \\
        We now
        prove the invariance of the extended blocks $\{E^{(s)}_{
          M}\}_{s, M}$ along the flow $\Phi^t_{H_Z}\,,$ arguing by
        induction on their dimension $s.$
        \\
 \noindent   {\sc Inductive base: $s
          = 0.$} As already observed in Remark \ref{norm.flow}, if
        $p(0)\in E_0,$ then $p(t) \equiv p(0)\ \forall t
        \in \R\,,$ hence the invariance of the block $E_0$ immediately
        follows.

\noindent        {\sc Inductive step:} Fix $M$ of dimension $s \geq
        1$ and a point $p \in E^{(s)}_M\,.$ We are now going to prove
        that
	\begin{equation}\label{piano}
	\p(t) \in \Pi^{(s)}_M(p)\ , \quad \forall t \in \R\,.
	\end{equation}
	Suppose by contradiction that there exists a finite time $\bar{t}>0$ such that
	$$
	\p(t) \in \Pi^{(s)}_M(p) \quad \forall t < \bar{t}\,, \quad \textrm{ and } \quad  \p(\bar{t}) \notin \Pi^{(s)}_M(p)\,.
	$$
	Then for such $\bar{t}$ one has that 
	\begin{equation}\label{notau}
	{\p(\bar{t}) \in \left \lbrace p + M_\R \right \rbrace\,.}
	\end{equation}
	Indeed, for any normalized vector $\lambda \in \R^d$ such that
        $\lambda \bot M\,,$ consider the quantity $I(t) = \p(t) \cdot
        \lambda\,.$ For all $t$ such that $ 0 \leq t < \bar{t}$ and
        one has
	\begin{align*}
	\dot{I} (t) &= \left \lbrace I (t) ,\ H_Z(p(t), q(t)) \right
        \rbrace \\ & = \sum_{0 < |k| \leq K } i \hat{Z}_k(\ep, p(t))
        (k \cdot \lambda) e^{i k \cdot q(t)} = 0\,,
	\end{align*}
	due to the fact that $\hat{Z}_k(\ep, \p(t)) = 0$ if $k \notin M,$ but $ k \cdot \lambda = 0$ if $k \in M\,.$
	Hence
	$$
	\p(\bar{t}) \cdot \lambda = \underset{t \rightarrow \bar{t}}{\lim}\ \p(t) \cdot \lambda = p \cdot \lambda\,,
	$$ from which \eqref{notau} follows, given the arbitrariety of
        the vector $\lambda \in M^\bot\,.$\\ Recall now the definition
        of $\Pi^{(s)}_M(p)\equiv\{p + M_\R\} \cap Z^{(s)}_M\,;$ since
        by eq. \eqref{notau} $p(\bar{t}) \in \{p + M_\R\}\,,$ it must
        be that
	%$\p(\bar{t}) \notin Z^{(s)}_M\,,$ hence in particular
	\begin{equation}\label{ctrl_dogana}
	p(\bar{t}) \in \partial Z^{(s)}_M\,.
	\end{equation}
	Since $E_0 \cup \{E^{(s)}_M\}_{s, M}$ is a partition of
        $\R^d\,,$ there exists $M^\prime$ such that $p(t) \in
        E^{(s^\prime)}_{M^\prime}$ with $s'=dim M'$ (possibly with $s^\prime = 0,$ if $\p(\bar{t}) \in E_0$). We analyze all the possible configurations.
	\begin{enumerate}
		\item $p(\bar{t}) \in E^{(s^\prime)}_{M^\prime}\,$ with $s^\prime = s\,:$ then, since by its definition $\p(\bar{t}) \notin Z^{(s)}_M\,,$ it must be $M^\prime \neq M\,.$ Thus
		$$
		\p(\bar{t}) \in \partial Z^{(s)}_M \cap Z^{(s)}_{M^\prime}\,,
		$$
		which is empty by Lemma \ref{lemma lontani}. Hence this case is contradictory.
		
		\item $p(\bar{t}) \in E^{(s^\prime)}_{M^\prime}\,$ with $s^\prime > s$. This leads again to a contradiction, since due to Remark \ref{zone inscatolate} one would have
		$$
		p(\bar{t}) \in E^{(s^\prime)}_{M^\prime} \subseteq Z^{(s^\prime)}_{M^\prime} \subseteq \left(\bigcup_{\begin{subarray}{c}
			M^{\prime \prime} \textrm{ of dim. } s  \\ M^{\prime \prime} \neq M
			\end{subarray} } Z^{(s)}_{M^{\prime \prime}} \right)\cup Z^{(s)}_M\,,
		$$
		but $p(\bar{t}) \in \partial Z^{(s)}_M$ implies that neither $\p(\bar{t}) \in Z^{(s)}_M\,,$ nor $\p(\bar{t}) \in Z^{(s)}_{M^{\prime \prime}}$ for any $M^{\prime \prime}$ of dimension $s,$ with $M^{\prime \prime } \neq M,$ due to Lemma \ref{lemma lontani}.
\item $p(\bar{t}) \in E^{(s^\prime)}_{M^\prime}\,$, with $s'<s$. Due to the
  induction assumption, the blocks $E^{(s^\prime)}_{M^\prime}$ of
  dimension $s^\prime < s$ are invariant under the dynamics of
  $H_Z$, thus no orbit can enter or exit from it
	\end{enumerate}
	Hence none of the above situations is possible, contraddicting
        the assumption that $\bar{t}<\infty$. Since the same occurs
        for negative times, we can conclude that
	$$
	p(t) \in \Pi^{(s)}_M(p)  \quad \forall t \in \R\,.
	$$
	Estimate \eqref{per sempre} then follows from Remark \ref{mini}.
	Moreover, since $\displaystyle{\p(t) \in \Pi^{(s)}_M(p) \subseteq \widetilde{E}^{(s)}_M}$ and by inductive hypothesis each block $E^{(s^{\prime})}_{M^\prime}$ with $s^\prime < s$ has been proven to be invariant under the flow of $H_Z$ for all real times, it must be
	$$
	\p(t) \in \widetilde{E}^{(s)}_M \backslash
        \left(\bigcup_{\begin{subarray}{c} s^\prime \leq s \\ \dim
            M^\prime = s^\prime \end{subarray}}
        E^{(s^{\prime})}_{M^\prime} \right)=E^{(s)}_M\ , \quad \forall t \in \R\,.
	$$
\end{proof}

\subsection{Adding the perturbation}
Here we come to study the dynamics of the Hamiltonian $H \circ {\cal
  T}\,.$

In the following, we will denote $(\pp(t), \qq(t)) = \Phi^t_{H \circ
  {\cal T}}(p, q)\,,$ with $H \circ {\cal T}$ as in Lemma
\ref{nor.for}.  Furthermore, in order to be able to study the dynamics
of a point starting in an extended block, say $E^{(s)}_M,$ we consider the following sets:
$$
\left(\Pi^{(s)}_M(p)\right)_{\ep^{\delta} }= \left\lbrace  p^\prime \in \R^d \ |\ \dist(p^\prime, \Pi^{(s)}_M(p)) < \ep^{\delta} \right\rbrace\,.
$$
The result we obtain is the following:
\begin{proposition}\label{coord comode}
	%Let $(\pp(t), \qq(t))$ be the time $t$ flow of the vector field generated by $H \circ {\cal T}\,.$
	For all $N$ there exists a positive threshold $\ep_N$ such that, if $\ep \leq \ep_N,$ then $\forall p \in \R^d$
	\begin{equation}
	|\pp(t) - p| \leq \ep^{\delta - ((d-1)(d+1) + 2) \beta} \quad \forall t \textrm{ s. t. } |t| \leq \ep^{-Na }\,.
	\end{equation}
\end{proposition}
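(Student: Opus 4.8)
The plan is to transfer the dynamical confinement established for the pure normal form $H_Z$ (Theorem \ref{parte geom}) to the perturbed Hamiltonian $H\circ\cT = H_0 + \sum_{j=1}^N Z_j + \cR^{(N)}$, using the fact that on the relevant time scale $|t|\le\ep^{-Na}$ the remainder $\cR^{(N)}$ can push $p$ only by a tiny amount, much smaller than the gap $\ep^\delta$ separating the fast drift block $\Pi^{(s)}_M(p)$ from the boundary of $Z^{(s)}_M$. Concretely, set $Z:=\sum_{j=1}^N Z_j$; by Lemma \ref{nor.for} each $Z_j$ is in normal form, so $Z$ is in normal form and all the geometry of Section 3.1 applies to $H_Z=H_0+Z$. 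Fix $p\in\R^d$. If $p\in E_0$ the drift of $\pp(t)$ comes only from $\cR^{(N)}$ (since $\{p,H_Z\}=0$ there by Remark \ref{norm.flow}), and we are done by the crude bound below; so assume $p\in E^{(s)}_M$ for some module $M$ of dimension $s\ge1$.

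First I would record the elementary a priori bound on the drift produced by the remainder: from \eqref{resti}--\eqref{riresti} with $|\alpha|=1,|\beta|=0$ one has $\norma{\partial_q\cR^{(N)}}_{C^0}\le C\ep^{1+Na}$ (and similarly for $\partial_p$), hence along the flow of $H\circ\cT$,
\begin{equation}
  \label{cc.1}
  |\dot{\pp}(t)| = |\partial_q(Z+\cR^{(N)})(\pp(t),\qq(t))| \le |\partial_q Z(\pp(t),\qq(t))| + C\ep^{1+Na}\,.
\end{equation}
The term $\partial_q Z$ is not small, but it only produces motion "along $M$" up to an error controlled by $\cR^{(N)}$: for any unit vector $\lambda\perp M_\R$, $\{\,\pp\cdot\lambda,\,H_Z\,\}=0$ exactly as in the proof of Theorem \ref{parte geom} (because $\hat Z_k=0$ for $k\notin M$ while $k\cdot\lambda=0$ for $k\in M$), so
\begin{equation}
  \label{cc.2}
  \Big|\frac{d}{dt}\big(\pp(t)\cdot\lambda\big)\Big| = \big|\{\pp\cdot\lambda,\cR^{(N)}\}\big| \le C\ep^{1+Na}\,,
\end{equation}
and integrating over $|t|\le\ep^{-Na}$ gives $|\pp(t)\cdot\lambda-p\cdot\lambda|\le C\ep$. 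Thus $\pp(t)$ stays within distance $C\sqrt{d}\,\ep$ of the plane $\{p+M_\R\}$ for all such $t$.

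The core of the argument is then a continuity/bootstrap over the time interval. Define $\bar t$ to be the first time in $[0,\ep^{-Na}]$ at which $\pp(t)$ leaves $\big(\Pi^{(s)}_M(p)\big)_{\ep^\delta}$; suppose for contradiction $\bar t$ exists. For $t\le\bar t$, $\pp(t)$ lies within $\ep^\delta$ of $\Pi^{(s)}_M(p)$, hence within $\ep^\delta+\textrm{diam}(\Pi^{(s)}_M(p))\le C\ep^{\delta-((d-1)(d+1)+2)\beta}$ of $p$ by Remark \ref{mini}; in particular on $[0,\bar t]$ the $p$-component ranges over a fixed compact set on which all seminorm bounds hold. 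Now I combine two facts: (i) by \eqref{cc.2} integrated, $\pp(\bar t)$ is within $C\ep$ of $\{p+M_\R\}$, so its projection orthogonal to $M$ has barely moved; (ii) by \eqref{cc.1} the total displacement satisfies $|\pp(\bar t)-p|\le \int_0^{\bar t}|\partial_q Z|\,dt + C\ep$, and one must show the $\partial_q Z$ contribution keeps $\pp$ inside $Z^{(s)}_M$. This last point is where the geometry enters: as long as $\pp(t)\in\big(\Pi^{(s)}_M(p)\big)_{\ep^\delta}\subseteq$ a small neighbourhood of $Z^{(s)}_M$, and as long as $\pp(t)$ is within $C\ep\ll\ep^\delta$ of $\{p+M_\R\}$, the point $\pp(\bar t)$ cannot have reached $\partial Z^{(s)}_M$ — the only way out of $\Pi^{(s)}_M(p)$ along $\{p+M_\R\}$ — because the distance from $\Pi^{(s)}_M(p)$ to the "dangerous" faces $|p\cdot k_j|=C_j\ep^{\delta_j}$ that are not yet saturated, together with the Lemma \ref{lemma lontani} gap $\textrm{dist}(\Pi^{(s)}_M(p),Z^{(s')}_{M'})>sC_sK^{s-1}\ep^{\delta_s}$ to every transversal resonant zone, all exceed $C\ep^{\delta}$ for $\ep$ small (here one uses $\delta_s=\delta-((s-1)(s+1))\beta$ and $K\sim\ep^{-\beta}$, so these gaps are of size $\ep^{\delta-O(\beta)}\gg\ep^\delta$ is false in general — rather one should compare with $\ep^\delta$ directly: the neighbourhood width $\ep^\delta$ is chosen small enough relative to these $\ep^{\delta_s}$-scale gaps precisely because $\delta_s<\delta$ is \emph{not} what we need; instead the point is that the remainder-induced transversal drift $C\ep$ is negligible against $\ep^{\delta_s}$, while the $\ep^\delta$-fattening only enlarges along directions that stay inside $Z^{(s)}_M$). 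Hence $\pp(\bar t)$ still lies in $\big(\Pi^{(s)}_M(p)\big)_{\ep^\delta}$, contradicting the choice of $\bar t$. Therefore $\pp(t)\in\big(\Pi^{(s)}_M(p)\big)_{\ep^\delta}$ for all $|t|\le\ep^{-Na}$, and the stated estimate follows from Remark \ref{mini} together with $\ep^\delta\le C\ep^{\delta-((d-1)(d+1)+2)\beta}$.

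The main obstacle is step (ii): making precise that the \emph{tangential} drift generated by $\partial_q Z$ (which is $O(1)$ in size and over time $\ep^{-Na}$ could in principle be enormous) nonetheless cannot carry the orbit out of the fast drift block. The resolution is that this tangential motion is exactly the motion of the unperturbed normal form flow confined by Theorem \ref{parte geom} — so one does not integrate $|\partial_q Z|$ naively but instead compares $\pp(t)$ with the solution $p_{H_Z}(t)$ of $H_Z$ with the same initial datum, using Gronwall: $\frac{d}{dt}|\pp(t)-p_{H_Z}(t)|\le \|X_{H_Z}\|_{C^1}\,|\pp(t)-p_{H_Z}(t)| + C\ep^{1+Na}$ gives $|\pp(t)-p_{H_Z}(t)|\le C\ep^{1+Na}e^{Ct}\le C\ep$ for $|t|\le\ep^{-Na}$ provided $\ep^{-Na}$ times the $C^1$ norm of $X_{H_Z}$ stays bounded — which it does, since $Z\in\sy1$ gives $\|X_{H_Z}-X_{H_0}\|_{C^1}\le C\ep^{1-\delta}$ and the linear flow of $H_0$ is explicit, so the Lipschitz constant on the relevant region is $O(\ep^{1-\delta})$ on the fibre directions... one must check $\ep^{1-\delta}\cdot\ep^{-Na}$ is bounded, i.e. $Na\le 1-\delta$, which fails for large $N$; the honest fix is to note that $X_{H_Z}$ restricted to the $p$-variables has $q$-derivative $\partial_q\partial_q Z=O(\ep)$ (since $Z\in\sy1$), so the relevant Gronwall constant is $O(\ep)$, giving $|\pp(t)-p_{H_Z}(t)|\le C\ep^{1+Na}e^{C\ep t}\le C\ep$ for $|t|\le\ep^{-Na}$ since $\ep\cdot\ep^{-Na}=\ep^{1-Na}\to0$. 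Then $|\pp(t)-p|\le|\pp(t)-p_{H_Z}(t)|+|p_{H_Z}(t)-p|\le C\ep+3d2^{d-1}C_d\ep^{\delta-((d-1)(d+1)+2)\beta}$ by Theorem \ref{parte geom}, and absorbing $C\ep$ into the second term (legitimate since $\delta<1$ and $\beta$ small) yields exactly the claimed bound.
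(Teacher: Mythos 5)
Your transversal estimate (the computation with $I(t)=\pp(t)\cdot\lambda$, $\lambda\perp M_\R$, showing $|\dot I|$ is controlled only by the remainder as long as the orbit stays in the fattened fast drift block) is correct and is exactly the paper's computation. But there is a genuine gap in how you exclude the exit at time $\bar t$: you anchor the whole argument at the block $E^{(s)}_M$ containing the \emph{initial} point $p$, and your contradiction argument does not rule out the orbit leaving $\bigl(\Pi^{(s)}_M(p)\bigr)_{\ep^\delta}$ by \emph{losing resonances}, i.e.\ by crossing $\partial Z^{(s)}_M$ along the plane $\{p+M_\R\}$ into a block $E^{(s')}_{M'}$ with $s'<s$, $M'\subsetneq M$. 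In the proof of Theorem \ref{parte geom} this scenario is excluded only because the lower-dimensional blocks are \emph{exactly} invariant (the inductive hypothesis: ``no orbit can enter or exit from it''); for $H\circ\cT$ they are merely approximately invariant, so that argument is unavailable. The paper's fix — and the one genuinely new idea of this proof relative to Theorem \ref{parte geom} — is to choose a time $t_0$ in $[-\ep^{-Na},\ep^{-Na}]$ at which the dimension $s_{t_0}$ of the resonance module is \emph{minimal over the whole time window}, and to run the confinement argument around $\Pi^{(s_{t_0})}_{M_{t_0}}(p_{t_0})$: then landing in a block of dimension $<s_{t_0}$ contradicts minimality, and Lemma \ref{lemma lontani} kills the cases $s\geq s_{t_0}$. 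Without this selection your case analysis at $\bar t$ is incomplete.

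Your proposed repair via a Gronwall comparison between $\pp(t)$ and the normal-form orbit $p_{H_Z}(t)$ does not work either. The variational equation of $H_Z=H_0+Z$ couples $\delta q$ to $\delta p$ through $\partial^2_{pp}H_0=\mathbbm{1}$, so the Lipschitz constant of $X_{H_Z}$ is $O(1)$, not $O(\ep)$: a phase error in $q$ of order $1$ develops after time $O(\ep^{-\delta})$ and then feeds back into $p$ along $M_\R$ through the resonant terms of $Z$. Over times $\ep^{-Na}$ with $N$ large the two orbits need not stay close at all; this is precisely why the paper never compares orbits and instead proves confinement of the perturbed orbit directly by the geometric argument. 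The final assembly (Remark \ref{mini} plus the $\ep^\delta$ fattening, and Lemma \ref{nor.for} to pass back to the original actions) is fine once the confinement in the minimal-dimension fast drift block is established.
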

\begin{proof}
Fix $p \in \R^d\,,$ then, for any time $t$ such that $|t| \leq
\ep^{-Na}\,,$ one has that either $\pp(t) \in E_0\,,$ or
$\pp(t)\in\Pi^{(s_t)}_{M_t}(p_t)$, for some fast drift block
identified by a suitable $M_t \subseteq \Z^d$ of dimension $s_t \geq
1$ and some (not unique) $p_t \in E^{(s_t)}_{M_t}$.
Let $t_0 \in [-\ep^{-Na}, \ep^{-Na}]$ be such that $M_{t_0}$ is of minimal
dimension, namely such that 
	$$
	{s_{t_0} = \dim M_{t_0}=\min_{|t|\leq \ep^{-Na}}\dim M_t \,.}
	$$ Of course, if there exists a time $t_0$ such that
        $p(t_0)\in E_0,$ then $M_{t_0} = \{0\}$ and
        $\Pi^{(s_{t_0})}_{M_{t_0}}(p_{t_0})=\left\{p_{t_0}\right\}$
        and
        $\left(\Pi^{(s_{t_0})}_{M_{t_0}}(p_{t_0})\right)_{\ep^\delta}
        \equiv B_{\ep^\delta}(p_{t_0})$, namely the ball of
        center $p_{t_0}$ and radius ${\ep^\delta}$.

        We are going to prove that
	\begin{equation}\label{resto li}
	p(t) \in \left(\Pi^{(s_{t_0})}_{M_{t_0}}(p_{t_0})\right)_{\ep^\delta} \quad \forall |t| \leq \ep^{-N a}\,
	\end{equation}
This is obtained arguing essentially as in the proof of Theorem
\ref{parte geom}.

Let $\bar{t}>0$ be the exit time of $\pp(t)$ from
$\left(\Pi^{(s_{t_0})}_{M_{t_0}}(p_{t_0})\right)_{\ep^\delta}$,
namely the time s.t. $\forall t$ with $0\leq t < \bar{t}$
	$$ \pp(t + t_0) \in
\left(\Pi^{(s_{t_0})}_{M_{t_0}}(p_{t_0})\right)_{\ep^\delta}, \quad
\textrm{ and } \quad \pp(\bar{t} + t_0) \notin
\left(\Pi^{(s_{t_0})}_{M_{t_0}}(p_{t_0}) \right)_{\ep^\delta}\,.
	$$
	We prove that $|\bar{t} + t_0| > \ep^{-N a},$ from which
        \eqref{resto li} follows.
Indeed, suppose by contradiction that $|\bar{t} + t_0| \leq  \ep^{- Na}\,.$ Then for any normalized vector $\lambda \in \R^d$ with $\lambda \bot M_{t_0},$ we consider the quantity
	$$
	I(t) = \pp(t + t_0) \cdot \lambda
	$$
due to Lemma \ref{lemma lontani}, for $0\leq t < \bar{t},$
\begin{align}
  \label{stii}
\left|\dot{I}(t)\right| = \left|\{I(t),\ H \circ {\cal T}\}\right|=\left|\{ I(t),\ R(t)\}\right|\leq K_N \ep^{1 + N a - \delta}
\end{align}
where $K_N$ is a constant bounding the r.h.s. of \eqref{resti}.
Since $|t + t_0| < \ep^{-N a}\,,$ $\displaystyle{|I(t) - I(0)| \leq 2
  K_N \ep^{1 - \delta}\,}$, thus, passing to the limit $t \rightarrow
\bar{t},$ we obtain
$$
|I(\bar{t}) - I(0)| \leq 2 K_N \ep^{1 - \delta}\,,
$$
which is strictly less than $\frac{\ep^\delta}{2}$ if $\ep$ is
small enough.

If $M_{t_0} = \{0\},$ this enables us to conclude that
	$$
	\dist \left(\pp(\bar{t} + t_0) , p_{t_0} \right) < \frac{\ep^\delta}{2}\,,
	$$ which contradicts the definition of $\bar{t}$ as the time
        of exit from $B(p_{t_0}, \ep^\delta)\,;$.
        \\
        Assume now $s_{t_0} \geq 1$, then \eqref{stii} implies
	\begin{equation} \label{vicino al piano}
	\dist \left(\pp(\bar{t} + t_0) , \Pi^{s_{t_0}}_{M_{t_0}}(p_{t_0}) \right) < \frac{\ep^\delta}{2}\,.
	\end{equation}
Since
	$$ \Pi^{(s_{t_0})}_{M_{t_0}}(p_{t_0}) = \{p_{t_0} +
M_{\R}\} \cap Z^{(s_{t_0})}_{M_{t_0}}\,;
$$ now, by the definition of $\bar{t},$ $\pp(\bar{t} + t_0) \notin
\left(\Pi^{(s_{t_0})}_{M_{t_0}}(p_{t_0})\right)_{\ep^\delta}\,$,
equation \eqref{vicino al piano} implies that in particular
$\pp(\bar{t} + t_0) \notin
Z^{(s_{t_0})}_{M_{t_0}}$.
Then one argues as
in the proof of Theorem \ref{parte geom} to deduce that, by Lemma
\ref{lemma lontani}, the point $\pp(\bar{t}+t_0)$ cannot belong to any
block $E^{(s)}_M$ with $s \geq s_{t_0}.$ Thus it must be
	$$
	\pp(\bar{t} + t_0) \in E^{(s)}_{M} \quad \textrm{ for some } s < s_{t_0}\,,
	$$
	which contradicts the minimality hypothesis on $s_{t_0}\,.$
	Hence, arguing analogously for negative times, we can deduce that \eqref{resto li} holds.
	Finally, recall that by Remark \ref{mini} this implies that
	$$
	|\pp(t) - p| \leq 3 d 2^{d-1} C_d \ep^{\delta-\left((d-1)(d+1)+2\right)\beta}\,.
	$$
\end{proof}
Combining the estimate in Proposition \ref{coord comode} and estimate \eqref{restip} on the size of the deformation induced on the action variables by the canonical transformation ${\cal T},$ we are finally able to deduce that for all $N \in \N$ and ${\forall t \in \R}$ such that $|t| \leq \ep^{-N a}\,$ there exists a positive constant $K^\prime_N$ such 
\begin{align*}
|p(t) - p(0)| & \leq |p(t) - \pp(t)| + |\pp(t) - \pp(0)| + |\pp (0) - p(0)|\\
& \leq K^\prime_N \ep^{1 - \delta} + 3 d 2^{d-1} C_d \ep^{\delta - \left((d-1)(d+1) + 2\right) \beta} + K^\prime_N \ep^{1 - \delta}\\
& \leq \left(2 K^\prime_N + 3 d 2^{d-1} C_d \right) \ep^{\delta - \left((d-1)(d+1) + 2\right) \beta}\,,
\end{align*}
which concludes the proof of Theorem \ref{nek}.

 % ---- Bibliography ----
\addcontentsline{toc}{chapter}{Bibliografia}
\bibliographystyle{alpha}
\bibliography{biblio}

\end{document}